\newcommand{\ifarticle}[2]{
    \csname@ifclassloaded\endcsname{beamer}{#2}{#1}
}
\newcommand{\ifbook}[2]{
    \csname@ifclassloaded\endcsname{amsbook}{#1}{#2}
}
        \setlist{topsep=2pt,itemsep=2pt,partopsep=2pt,parsep=2pt} 
        \xpretocmd{\@adminfootnotes}{\let\@makefntext\BHFN@OldMakefntext}{}{}
        \renewcommand\@makefntext[1]{%
        \@ifundefined{@makefnmark}
            {}
            {%
            \renewcommand\@makefnmark{%
            \mbox{%
                \textsuperscript{%
                \normalfont
                \hyperref[\BackrefFootnoteTag]{\@thefnmark}%
                }%
            }\,%
            }%
            \BHFN@OldMakefntext{#1}%
        }%
        }
        \LetLtxMacro{\BHFN@Old@footnotemark}{\@footnotemark}
        \renewcommand*{\@footnotemark}{%
            \refstepcounter{BackrefHyperFootnoteCounter}%
            \xdef\BackrefFootnoteTag{bhfn:\theBackrefHyperFootnoteCounter}%
            \label{\BackrefFootnoteTag}%
            \BHFN@Old@footnotemark
        }
        \def\paragraph{\@startsection{paragraph}{4}%
          \z@\z@{-\fontdimen2\font}%
          {\normalfont\bfseries}}
        \theoremstyle{plain}
            \newtheorem{theorem}{Theorem}[chapter]
            \newtheorem{theorem}{Theorem}[section]
        \newtheorem{proposition}[theorem]{Proposition}
        \newtheorem{lemma}[theorem]{Lemma}
        \newtheorem{corollary}[theorem]{Corollary}
        \newtheorem*{theorem*}{Theorem}
        \newtheorem*{corollary*}{Corollary}
        \theoremstyle{definition}
        \newtheorem{definition}[theorem]{Definition}
        \newtheorem{remark}[theorem]{Remark}
        \Crefname{axiom}{Axiom}{Axioms}
        \newenvironment{sketch}{\proof}{\endproof}
        \Crefname{theoremenumi}{Theorem}{Theorems}
            \setlist[enumerate,1]{
                ref={\csname thetheorem\endcsname.(\arabic*)}
            }%
            \setlist[enumerate,2]{
                ref={\thetheorem.(\arabic*).(\alph*)}
            }%
        \Crefname{propositionenumi}{Proposition}{Propositions}
            \setlist[enumerate,1]{
                ref={\csname theproposition\endcsname.(\arabic*)}
            }%
            \setlist[enumerate,2]{
                ref={\theproposition.(\arabic*).(\alph*)}
            }%
        \Crefname{lemmaenumi}{Lemma}{Lemmas}
            \setlist[enumerate,1]{
                ref={\csname thelemma\endcsname.(\arabic*)}
            }%
            \setlist[enumerate,2]{
                ref={\thelemma.(\arabic*).(\alph*)}
            }%
        \Crefname{corollaryenumi}{Corollary}{Corollaries}
            \setlist[enumerate,1]{
                ref={\csname thecorollary\endcsname.(\arabic*)}
            }%
            \setlist[enumerate,2]{
                ref={\thecorollary.(\arabic*).(\alph*)}
            }%
        \Crefname{definitionenumi}{Definition}{Definitions}
            \setlist[enumerate,1]{
                ref={\csname thedefinition\endcsname.(\arabic*)}
            }%
            \setlist[enumerate,2]{
                ref={\thedefinition.(\arabic*).(\alph*)}
            }%
        \Crefname{exampleenumi}{Example}{Examples}
            \setlist[enumerate,1]{
                ref={\csname theexample\endcsname.(\arabic*)}
            }%
            \setlist[enumerate,2]{
                ref={\theexample.(\arabic*).(\alph*)}
            }%
        \Crefname{axiomenumi}{Axiom}{Axioms}
            \setlist[enumerate,1]{
                ref={\csname theaxiom\endcsname.(\arabic*)}
            }%
            \setlist[enumerate,2]{
                ref={\theaxiom.(\arabic*).(\alph*)}
            }%
        \newcommand{\qedshift}{\vspace*{-\baselineskip}}
        \AtBeginEnvironment{\env}{%
          \pushQED{\qed}%
        }
        \AtEndEnvironment{\env}{\popQED\endexample}
    \NewDocumentCommand{\mathcommand}{mO{0}m}
     {
      \exp_args:Nc \NewCommandCopy {khue_kept_\cs_to_str:N #1} { #1 }
      \exp_args:Nc \newcommand {khue_new_\cs_to_str:N #1}[#2]{#3}
      \DeclareDocumentCommand {#1} {}
       {
        \mode_if_math:TF
         {
          \use:c {khue_new_\cs_to_str:N #1}
         }
         {
          \use:c {khue_kept_\cs_to_str:N #1}
         }
       }
     }
    \newsavebox\tikzcdbox
    \mathcommand{\h}{\textup{-}}
    \newcommand{\tx}{\mathrm}
    \mathcommand{\b}{\mathbf}
    \newcommand{\cl}{\mathcal}
    \mathcommand{\bb}{\mathbb}
    \DeclareMathAlphabet{\bbn}{U}{bbold}{m}{n}
    \mathcommand{\sf}{\mathsf}
    \mathcommand{\u}{\underline}
    \newcommand{\TODO}[1][TODO]{\textcolor{orange}{\textup{#1}}\xspace}
    \newcommand{\flip}[1]{\text{\rotatebox[origin=c]{-180}{$#1$}}}
    \newcommand{\datetoday}{\date{\cleanlookdateon\today}}
    \newcommand{\defeq}{\mathrel{:=}}
    \mathcommand{\d}{\mathbin{;}}
    \mathcommand{\c}{\circ}
    \newcommand{\ph}[1][]{{({-}_{#1})}}
    \newcommand{\iso}{\cong}
    \newcommand{\xto}{\xrightarrow}
    \newcommand{\xfrom}{\xleftarrow}
    \newcommand{\tto}{\Rightarrow}
    \newcommand{\xtto}{\xRightarrow}
    \newcommand{\epito}{\twoheadrightarrow}
    \newcommand*\cocolon{%
            \nobreak
            \mskip6mu plus1mu
            \mathpunct{}%
            \nonscript
            \mkern-\thinmuskip
            {:}%
            \mskip2mu
            \relax
    }
    \def\slashedarrowfill@#1#2#3#4#5{%
    $\m@th\thickmuskip0mu\medmuskip\thickmuskip\thinmuskip\thickmuskip
    \relax#5#1\mkern-7mu%
    \cleaders\hbox{$#5\mkern-2mu#2\mkern-2mu$}\hfill
    \mathclap{#3}\mathclap{#2}%
    \cleaders\hbox{$#5\mkern-2mu#2\mkern-2mu$}\hfill
    \mkern-7mu#4$%
    }
    \def\rightslashedarrowfill@{%
    \slashedarrowfill@\relbar\relbar\mapstochar\rightarrow}
    \newcommand\xslashedrightarrow[2][]{%
    \ext@arrow 0055{\rightslashedarrowfill@}{#1}{#2}}
    \def\leftslashedarrowfill@{%
    \slashedarrowfill@\leftarrow\relbar\mapsfromchar\relbar}
    \newcommand\xslashedleftarrow[2][]{%
    \ext@arrow 0055{\leftslashedarrowfill@}{#1}{#2}}
    \newcommand{\inv}{^{-1}}
    \newcommand{\tp}[1]{\langle#1\rangle}
    \newcommand{\unit}{{\tp{}}}
    \newcommand{\adj}{\dashv}
    \DeclareFontFamily{U}{min}{}
    \DeclareFontShape{U}{min}{m}{n}{<-> udmj30}{}
    \mathcommand{\comma}{\downarrow}
    \newcommand{\copi}{\flip\pi}
    \newsavebox{\whitecircstar}\sbox{\whitecircstar}{\kern.075em\tikz{\node[draw, circle,line width=.36pt, inner sep=0]{$*$};}\kern.075em}
    \newsavebox{\blackcircstar}\sbox{\blackcircstar}{\kern.075em\tikz{\node[fill, circle, line width=.36pt, inner sep=0, text=white]{$*$};}\kern.075em}
    \def\widebreve{\mathpalette\wide@breve}
    \def\wide@breve#1#2{\sbox\z@{$#1#2$}%
         \mathop{\vbox{\m@th\ialign{##\crcr
    \kern0.08em\brevefill#1{0.8\wd\z@}\crcr\noalign{\nointerlineskip}%
                        $\hss#1#2\hss$\crcr}}}\limits}
    \def\brevefill#1#2{$\m@th\sbox\tw@{$#1($}%
      \hss\resizebox{#2}{\wd\tw@}{\rotatebox[origin=c]{90}{\upshape(}}\hss$}
    \NewDocumentCommand{\jrule}{om}{%
        \IfNoValueTF{#1}
            {\textsc{#2}}
            {$#1$-\textsc{#2}}%
    }
    \newcommand{\Set}{{\b{Set}}}
    \newcommand{\Cat}{\b{Cat}}
    \newcommand{\xth}{\textsuperscript{th}}
    \newcommand{\eg}{e.g.\@\xspace}
    \newcommand{\ie}{i.e.\@\xspace}
    \newcommand{\cf}{cf.\@\xspace}
    \NewDocumentCommand{\etc}{t.}{etc.\@\xspace}
    \NewDocumentCommand{\ibid}{t.}{ibid.\@\xspace}
    \NewDocumentCommand{\loccit}{t.}{loc.\ cit.\@\xspace}
\patchcmd{\beamer@sectionintoc}{\vfill}{\vskip\itemsep}{}{}
  \colorlet{colour-bg}{black!85} 
  \definecolor{colour-primary}{HTML}{cc80ff} 
  \colorlet{colour-text}{black!10} 
  \colorlet{colour-subtle}{black!40} 
  \colorlet{colour-block-bg}{black!80} 
  \definecolor{colour-warning-bg}{HTML}{ffea80} 
  \definecolor{colour-warning-primary}{HTML}{e08152} 
  \apptocmd{\frame}{}{\justifying}{}
  \newtheorem{proposition}[theorem]{\translate{Proposition}}
  \renewenvironment<>{block}[1]{%
      \begin{actionenv}#2%
        \par%
        \usebeamertemplate{block begin}}
      {\par%
        \usebeamertemplate{block end}%
      \end{actionenv}}
  \renewenvironment<>{exampleblock}[1]{%
      \begin{actionenv}#2%
          \par%
          \only<presentation>{
            \setbeamercolor{local structure}{parent=example text}}%
          \usebeamertemplate{block example begin}}
        {\par%
          \usebeamertemplate{block example end}%
        \end{actionenv}}
\newcommand{\A}{{\cl A}}
\newcommand{\C}{{\cl C}}
\newcommand{\E}{{\cl E}}
\newcommand{\J}{{\cl J}}
\newcommand{\M}{{\cl M}}
\newcommand{\nat}[1]{\text{$#1$ nat.}}
\title{Magmal characterisations of cocartesian categories}
\author{Nathanael Arkor}
\address{Department of Software Science, Tallinn University of Technology, Estonia}
\thanks{The author was supported by a departmental postdoctoral grant from the Department of Software Science at Tallinn University of Technology.}
\subjclass{18A35,18A40,18M05}
\begin{document}

\begin{abstract}
    We present a survey of characterisations of cocartesian categories in terms of monoidal categories -- and, more generally, magmal categories -- satisfying additional properties. In particular, we show that the following are equivalent for a unital magmal category $(\M, \otimes)$, sharpening several classical characterisations.
    \begin{enumerate}[itemsep=0mm]
        \item $(\M, \otimes)$ is cocartesian monoidal.
        \item Every object of $\M$ admits the structure of a unital magma with respect to $\otimes$, such that every morphism is a homomorphism, and a single compatibility condition holds between the magma structures and $\otimes$.
        \item The tensor product functor ${\otimes} \colon \M \times \M \to \M$ admits a right adjoint.
    \end{enumerate}
\end{abstract}

\maketitle

\section{Introduction}

Every cocartesian category -- that is, every category with finite coproducts -- is a monoidal category, the tensor product $A \otimes B$ being given by the binary coproduct $A + B$, and the unit $I$ being given by the initial object $0$. Conversely, is natural to ask how we might characterise the cocartesian monoidal categories amongst the monoidal categories, in a manner amenable to verification in particular examples. The purpose of this paper is to provide a convenient reference for several characterisation results for cocartesian monoidal categories, as appearing in \cref{characterisations}; as well as to show that the sharpest of the characterisations, \eqref{sharpest-characterisation}, is essentially optimal, in the sense that each of its assumptions really is necessary. In fact, for the sake of generality, we work primarily not with monoidal categories, but \emph{colax unital magmal} categories, which differ from monoidal categories in not imposing an associativity constraint and in not requiring invertibility of the unitors $\lambda$ and $\rho$. In the following, the monoidal (or magmal) structure on the endofunctor category $[\M, \M]$ is the pointwise structure inherited from $\M$.

\begin{theorem}
    \label{characterisations}
    For a colax unital magmal category $(\M, \otimes, I, \lambda, \rho)$, the following are equivalent.
    \begin{enumerate}[label=\alph*.]
        \item \label{cocartesian} $(\M, \otimes, I, \lambda, \rho)$ is cocartesian.
        \item \label{uniqueness-characterisation} $\M$ is symmetric monoidal and the identity functor on $\M$ admits a unique commutative monoid structure $(1_\M, \mu, \eta)$ in $[\M, \M]$.
        \item \label{symmetry-characterisation} $(\M, \otimes)$ is symmetric monoidal, the identity functor on $\M$ admits the structure of a unital magma $(1_\M, \mu, \eta)$ in $[\M, \M]$, and \eqref{symmetry-equation} commutes for each $A, B \in \M$ (where $A \otimes \sigma_{A, B} \otimes B$ denotes the canonical morphism induced by $\sigma_{A, B}$ and associativity).
        \begin{equation}
        \label{symmetry-equation}
		\begin{tikzcd}
			{(A \otimes A) \otimes (B \otimes B)} && {(A \otimes B) \otimes (A \otimes B)} \\
			& {A \otimes B}
			\arrow["{A \otimes \sigma_{A, B} \otimes B}", from=1-1, to=1-3]
			\arrow["{\mu_A \otimes \mu_B}"', from=1-1, to=2-2]
			\arrow["{\mu_{A \otimes B}}", from=1-3, to=2-2]
		\end{tikzcd}
        \end{equation}
        \item \label{sharpest-characterisation} The identity functor on $\M$ admits the structure of a unital magma $(1_\M, \mu, \eta)$ in $[\M, \M]$, and \eqref{quasi-symmetry-equation} commutes for each $A, B \in \M$.
        \begin{equation}
        \label{quasi-symmetry-equation}
		\begin{tikzcd}[column sep=large]
			{(A \otimes I) \otimes (I \otimes B)} && {(A \otimes B) \otimes (A \otimes B)} \\
			{A \otimes B} && {A \otimes B}
			\arrow["{(A \otimes \eta_B) \otimes (\eta_A \otimes B)}", from=1-1, to=1-3]
			\arrow["{\mu_{A \otimes B}}", from=1-3, to=2-3]
			\arrow["{\rho_A \otimes \lambda_B}", from=2-1, to=1-1]
			\arrow[equals, from=2-1, to=2-3]
		\end{tikzcd}
        \end{equation}
        \item \label{adjoint-characterisation} ${\otimes} \colon \M \times \M \to \M$ admits a right adjoint, and $\lambda$ and $\rho$ are invertible.
    \end{enumerate}
\end{theorem}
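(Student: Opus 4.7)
The plan is to establish a cyclic chain of implications, (a) $\Rightarrow$ (b) $\Rightarrow$ (c) $\Rightarrow$ (d) $\Rightarrow$ (e) $\Rightarrow$ (a), with most steps being essentially formal and the central content concentrated in (d) $\Rightarrow$ (e). For (a) $\Rightarrow$ (b), I will take $\mu$ to be the codiagonal and $\eta$ to be the unique morphism from the initial object; these form a commutative monoid, and uniqueness is forced by applying the unit laws through the coproduct inclusions. For (b) $\Rightarrow$ (c), \eqref{symmetry-equation} is exactly the standard compatibility between a commutative monoid structure on $1_\M$ and the tensor in a symmetric monoidal category. For (c) $\Rightarrow$ (d), the key manipulation is to rewrite $(A \otimes \eta_B) \otimes (\eta_A \otimes B)$ as $(A \otimes \sigma_{A, B} \otimes B) \c ((A \otimes \eta_A) \otimes (\eta_B \otimes B))$ (using naturality of $\sigma$ together with the triviality of $\sigma_{I, I}$), apply \eqref{symmetry-equation}, and finally invoke the magma unit law in each tensor factor.

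The main substance lies in (d) $\Rightarrow$ (e). I will first show that $A \otimes B$ has the universal property of the coproduct $A + B$, with candidate inclusions $i_A \defeq (A \otimes \eta_B) \c \rho_A$ and $i_B \defeq (\eta_A \otimes B) \c \lambda_B$, and with cotuple $[f, g] \defeq \mu_C \c (f \otimes g)$ for $f \colon A \to C$ and $g \colon B \to C$. The factorisation equations $[f, g] \c i_A = f$ and $[f, g] \c i_B = g$ follow from naturality of $\eta$ combined with the magma unit laws. Uniqueness is exactly where \eqref{quasi-symmetry-equation} enters essentially: it unpacks precisely to $\mu_{A \otimes B} \c (i_A \otimes i_B) = 1_{A \otimes B}$, so for any $h \colon A \otimes B \to C$ factoring $f$ and $g$ through the inclusions, naturality of $\mu$ yields $h = h \c \mu_{A \otimes B} \c (i_A \otimes i_B) = \mu_C \c (h \otimes h) \c (i_A \otimes i_B) = \mu_C \c (f \otimes g) = [f, g]$. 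This already provides a right adjoint to $\otimes$, namely the diagonal; for invertibility of $\lambda, \rho$ it then suffices to show that $I$ is initial, which I plan to derive by combining naturality of $\eta$ (which forces $f \c \eta_I = \eta_A$ for every $f \colon I \to A$) with the coproduct structure at $I \otimes I$ and the magma unit laws to force $\eta_I = 1_I$.

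Finally, (e) $\Rightarrow$ (a) is the classical argument: writing the right adjoint as $R(C) = (R_1 C, R_2 C)$, substituting $A = I$ in the adjunction bijection and using invertibility of $\lambda$ gives $\M(B, C) \cong \M(I, R_1 C) \times \M(B, R_2 C)$, whence Yoneda forces $R_2 C \cong C$ and $I$ initial; a symmetric argument using $\rho$ handles $R_1$, so that $A \otimes B$ is the coproduct. The principal obstacle is the implication (d) $\Rightarrow$ (e), and specifically the proof that $I$ is initial, since this does not follow from the magma axioms in isolation but requires their subtle interaction with the just-derived coproduct structure.
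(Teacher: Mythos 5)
The genuine gap is your step (b) $\Rightarrow$ (c). Commutativity of \eqref{symmetry-equation} is \emph{not} a formal consequence of the mere existence of a commutative monoid structure on $1_\M$ in $[\M, \M]$: it is an additional compatibility between $\mu$ and $\otimes$ (essentially, that $\mu_{A \otimes B}$ agrees with the tensor-product multiplication), and the paper's discussion of prior work explicitly flags existence-only versions of such claims in the literature as erroneous. The implication really runs through the \emph{uniqueness} clause of (b): by \cref{tensor-product-of-unital-magmas}, the commutative monoid structures on $A$ and $B$ induce a second commutative monoid structure on $A \otimes B$, and uniqueness forces it to coincide with $(\mu_{A \otimes B}, \eta_{A \otimes B})$; that coincidence of multiplications is literally \eqref{symmetry-equation}. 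Your plan never invokes uniqueness anywhere in the cycle, so as written this leg has no proof. A smaller omission of the same flavour: in (a) $\Rightarrow$ (b) you must also construct the symmetric monoidal structure itself (invertible unitors, associator, braiding) from the coproduct universal property, since (a) only provides a colax unital magmal category; the paper devotes \cref{unique-associator} and a citation to exactly this.

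The remaining legs are essentially sound and partly diverge from the paper in a reasonable way. Your (d) $\Rightarrow$ (e) is \cref{coproducts-via-splitting} unpacked: the cotuple $\mu_C \circ (f \otimes g)$, the unit laws for the two factorisation identities, and the observation that \eqref{quasi-symmetry-equation} says exactly $\mu_{A \otimes B} \circ (i_A \otimes i_B) = 1_{A \otimes B}$, so naturality of $\mu$ gives uniqueness; this avoids the naturally-weak-colimit and idempotent-splitting machinery, at the cost of losing the paper's finer statements about when coproducts exist absent \eqref{quasi-symmetry-equation}. Your initiality-of-$I$ step is only gestured at; it does go through (\cref{eta-I-is-1} obtains $\eta_I = 1_I$ from the unit law, naturality of $\lambda$ and the axiom $\lambda_I = \rho_I$, which your sketch will need but never mentions). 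Your (e) $\Rightarrow$ (a) is the Yuan--Campion-style representability argument, whereas the paper (\cref{adjoint-implies-cocartesian}) argues via uniqueness of counital pseudocomagma structures on a category; your route works, but ``Yoneda forces'' hides several steps: splitting off the factor $\M(I, R_1 C)$ uses connectedness of representables, initiality of $I$ only emerges after combining the $\lambda$- and $\rho$-sided arguments (the first alone gives only that $\M(I, R_1 C)$ is a singleton), the extracted isomorphisms $R_i C \iso C$ must be checked natural in $C$, and one must finally verify that the \emph{canonical} cospan of \cref{coprojections} -- not merely some cospan -- exhibits the coproduct, which holds because it differs from the one produced by your argument by isomorphisms.
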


\begin{remark}
	\label{reformulations}
    There are several equivalent ways to express structure on the identity functor. For instance, the following are equivalent for a symmetric monoidal category $\M$. (There are two sets of equivalent statements, given by either including in every statement the words inside square brackets, or excluding them.)
    \begin{itemize}
        \item The identity functor on $\M$ admits a [unique] commutative monoid structure in $[\M, \M]$ (with the pointwise symmetric monoidal structure).
        \item The forgetful functor from the category of commutative monoids in $\M$ admits an [invertible] section.
        \item Every object in $\M$ admits a [unique] commutative monoid structure, for which every morphism in $\M$ is a homomorphism.
    \end{itemize}
    For the remainder of this paper, we shall primarily emphasise the first perspective, leaving the straightforward reformulations to the reader.
\end{remark}

\begin{remark}
	Assuming that the unitors $\lambda$ and $\rho$ in \cref{characterisations} are invertible (which will typically be the case in practice), the commutativity condition \eqref{quasi-symmetry-equation} in \cref{sharpest-characterisation} may be replaced by the following, more familiar, triangle.
	\[\begin{tikzcd}[column sep=large]
		{(A \otimes I) \otimes (I \otimes B)} && {(A \otimes B) \otimes (A \otimes B)} \\
		& {A \otimes B}
		\arrow["{(A \otimes \eta_B) \otimes (\eta_A \otimes B)}", from=1-1, to=1-3]
		\arrow["{\rho\inv_A \otimes \lambda\inv_B}"', from=1-1, to=2-2]
		\arrow["{\mu_{A \otimes B}}", from=1-3, to=2-2]
	\end{tikzcd}\qedshift\]
\end{remark}

\begin{remark}
	Naturally, every characterisation of \emph{cocartesian} monoidal categories is equivalently a characterisation of \emph{cartesian} monoidal categories, by considering the opposite of the category in question. Such characterisations respectively involve \emph{co}unital \emph{co}magmas, and a \emph{left} adjoint to ${\otimes} \colon \M \times \M \to \M$.
\end{remark}

\subsection{Prior work}

The characterisations of \cref{characterisations} might be viewed as categorical folklore, similar characterisations having been frequently rediscovered in the literature. However, the present paper was motivated by the discovery that locating references for these characterisations in complete generality is surprisingly difficult: to our knowledge, conditions \mbox{(\ref{symmetry-characterisation} -- \ref{adjoint-characterisation})} do not appear in the literature without also being accompanied by further redundant assumptions. Furthermore, more concerningly, some references impose \emph{fewer} assumptions than are necessary to obtain a characterisation. It thus seeemed worthwhile to set the record straight.

To provide some historical context, we shall explain, to the extent we have been able to ascertain, the precedent for the two flavours of characterisations presented in \cref{characterisations}.

\subsubsection{Monoidal characterisations of cocartesian categories}

The earliest characterisation of cocartesian monoidal categories in terms of the existence of monoid structure on each object is usually credited to a \citeyear{fox1976coalgebras} paper of \textcite{fox1976coalgebras}, the term \emph{Fox's theorem} subsequently often being associated to various characterisations of this form. \citeauthor{fox1976coalgebras} proved that the construction of the category of commutative monoids in a symmetric monoidal category provides a coreflective right adjoint to the inclusion of cocartesian monoidal categories in symmetric monoidal categories and strict\footnotemark{} symmetric monoidal functors.
\footnotetext{The result holds also for other varieties of symmetric monoidal functor (\cf~\cite[Corollary~3.19]{pisani2014sequential}).}%
\begin{equation}
\begin{tikzcd}[column sep=large]
	{\b{CocartCat}_s} & {\b{SymMonCat}_s}
	\arrow[""{name=0, anchor=center, inner sep=0}, shift left=2, hook, from=1-1, to=1-2]
	\arrow[""{name=1, anchor=center, inner sep=0}, "{\b{CMon}}", shift left=2, from=1-2, to=1-1]
	\arrow["\dashv"{anchor=center, rotate=-90}, draw=none, from=0, to=1]
\end{tikzcd}
\end{equation}
It therefore follows that a symmetric monoidal category is cocartesian if and only if every object admits a unique commutative monoid structure for which every morphism is a homomorphism: by \cref{reformulations}, this is equivalent to characterisation \eqref{uniqueness-characterisation} in \cref{characterisations}.

That the assumption of uniqueness of the monoid structure on each object may be replaced by suitable compatibility conditions (\eg \eqref{symmetry-equation} or \eqref{quasi-symmetry-equation}), and that it suffices to consider unital magmas rather than commutative monoids, appears to have been frequently rediscovered (\cf{}~\cites[\S1]{corradini1999algebraic}[\S6.4]{mellies2009categorical}). That symmetry of the monoidal category is not necessary has been less frequently observed but appears, for instance, in \cite[Proposition~16]{mellies2009categorical}.

The most general prior characterisation was given by Jeff Egger in an email to the categories mailing list\footnote{This was brought to our attention by a remark in a preprint of \textcite{pisani2014sequential} during the preparation of this paper.} on the 15\xth{} of March 2013, in which Egger observes that \eqref{quasi-symmetry-equation} is the only necessary condition on the unital magma structures. Our characterisation is thus a mild generalisation of Egger's, demonstrating that neither associativity of the monoidal structure in question, nor invertibility of the unitors, is necessary.

On the other hand, in \cite[Remark~1.3.(iii)]{carboni1987cartesian} and elsewhere, it has been claimed that a symmetric monoidal category is cocartesian if and only if every object admits a commutative monoid structure for which every morphism is a homomorphism (in other words, that \eqref{quasi-symmetry-equation} is unnecessary). We shall show that in \cref{necessity} that this is not true.

\subsubsection{Adjoint characterisations of cocartesian categories}

By definition, a cocartesian monoidal category is a monoidal category $(\M, \otimes, I)$ for which the tensor product functor ${\otimes} \colon \M \times \M \to \M$ is left-adjoint to the diagonal functor $\Delta \colon \M \to \M \times \M$, and for which the unit object $I \in \M$ is initial (equivalently, the functor $I \colon 1 \to \M$ picking out the unit is left-adjoint to the unique functor $\unit \colon \M \to 1$). In \cite[\S3]{street2014kan}, \citeauthor{street2014kan} observed that it suffices for ${\otimes} \colon \M \times \M \to \M$ to to be left-adjoint to \emph{some} functor (not necessarily the diagonal), in addition to $I$ being initial. This observation is useful for enriched category theory, as it provides a natural generalisation of cocartesian monoidal categories to enriched categories, when a diagonal functor may not exist (since the base of enrichment need not admit products).

On the 3\textsuperscript{rd} of January 2021, unaware of \citeauthor{street2014kan}'s characterisation, the author asked\footnotemark{} on the question-and-answer site MathOverflow for a reference for monoidal categories in which the tensor product functor admits a left adjoint. In response, Qiaochu Yuan and Tim Campion together proved that these are precisely the cartesian monoidal categories, thus improving upon \citeauthor{street2014kan}'s characterisation by dropping the assumption on the unit. As far as we are aware, this observation has not appeared previously in the literature. We give a new proof of this fact in \cref{an-adjoint-characterisation} by combining the observations of Street, Yuan, and Campion.
\footnotetext{\url{https://mathoverflow.net/questions/380302/monoidal-categories-whose-tensor-has-a-left-adjoint}}%

\begin{remark}
	Monoidal categories may be viewed as certain kinds of multicategories, namely the \emph{representable multicategories}~\cite{hermida2000representable}. Under this relationship, cocartesian monoidal categories correspond precisely to the representable \emph{cocartesian multicategories} (called \emph{sequential} in \cite{pisani2014sequential}). In \cite[Proposition~3.6]{pisani2014sequential}, \citeauthor{pisani2014sequential} gave a characterisation of the cocartesian multicategories in terms of symmetric multicategories admitting a \emph{central unital magma}. When specialised to representable multicategories, this recovers the characterisation in \eqref{symmetry-characterisation}~\cite[Proposition~3.21]{pisani2014sequential}. It would be interesting to see whether this multicategorical characterisation can be generalised to the same extent as \cref{characterisations} (for instance, by dropping the assumption of symmetry on the multicategory).
\end{remark}

\subsection{Acknowledgements}

We thank Qiaochu Yuan and Tim Campion for their insightful answers to the author's questions, and for providing the original proof of the characterisation of monoidal categories with adjoint tensor product. We also thank Tim Campion and Vikraman Choudhury for helpful comments on the paper.
We thank Peter Selinger for pointing out that a counit compatibility condition is derivable (\cref{eta-I-is-1}), which prompted us to search for the minimal conditions; and Robin Houston for providing several references.

\section{Naturally weak adjoints and naturally weak colimits}

Our proof strategy for establishing the characterisation of cocartesian monoidal categories in terms of unital magmas is based on observing the existence of certain canonical idempotents, whose splittings exhibit the promised cocartesian structure. For this, it will be helpful to introduce the preliminary concepts of \emph{naturally weak relative adjoint} and \emph{naturally weak colimit}. However, we note that these concepts are used only as a technical tool in certain proofs, and are not necessary to follow the remainder of the paper. The reader should therefore feel welcome to skip to \cref{magmal-categories-and-magmas} and refer back to this section only as necessary.

A functor $L \colon \A \to \C$ is a \emph{weak left adjoint} of a functor $R \colon \C \to \A$ when there is a family of surjections $\A(X, RY) \epito \C(LX, Y)$, natural in $X \in \A$ and $Y \in \C$~\cite[\S1]{kainen1971weak}. Assuming the axiom of choice, every surjection admits a section; in the absence of choice, it is natural to instead ask that a weak left adjoint instead comes equipped with a choice of section for each $X \in \A$ and $Y \in \C$. Note that, even then, there is no guarantee that the choices of sections assemble into a natural transformation. We introduce the following definition to capture a notion of weak adjoint satisfying this stronger property. In fact, for what follows, it will be useful to work with a weak analogue of the more general concept of \emph{relative} adjoint functors~\cite{ulmer1968properties}.

\begin{definition}
	\label{naturally-weak-relative-adjunction}
	A \emph{naturally weak relative adjunction} comprises a diagram of functors and a natural transformation as follows,
	\[\begin{tikzcd}
		& \C \\
		\A && \E
		\arrow["R", from=1-2, to=2-3]
		\arrow["L", from=2-1, to=1-2]
		\arrow[""{name=0, anchor=center, inner sep=0}, "J"', from=2-1, to=2-3]
		\arrow["\eta", between={0.3}{0.7}, Rightarrow, from=0, to=1-2]
	\end{tikzcd}\]
	such that the induced natural transformation
	\[\C(L{-}, {-}) \xtto{R_{L{-}, {-}}} \E(RL{-}, R{-}) \xtto{\E(\eta, R{-})} \E(J{-}, R{-})\]
	admits a section $\flat \colon \E(J{-}, R{-}) \tto \C(L{-}, {-})$. A \emph{relative adjunction} is a naturally weak relative adjunction for which $\flat$ is not just a section, but furthermore an inverse.
\end{definition}

Note that, in contrast to adjoints, naturally weak adjoints are not uniquely determined: a given functor may have many non-isomorphic naturally weak adjoints.

If we are given a naturally weak relative adjoint, we should like to know when we have an actual relative adjoint. The following lemma shows that it suffices for a certain idempotent to be split (\cf~\cites[Exercise~IV.1.4]{maclane1998categories}{street1996consequences}).

\begin{lemma}
	\label{relative-adjoint-via-splitting}
	Suppose we are given a naturally weak relative adjunction as in \cref{naturally-weak-relative-adjunction}. Then, for each object $X \in \A$, the object $LX \in \C$ is equipped with an idempotent $\flat_{X, LX}(\eta_X)$. $R$ admits a $J$-relative left adjoint if and only if each such idempotent splits, in which case the splittings exhibit the left adjoint. In particular, $L$ is left adjoint to $R$ relative to $J$ if and only if $\flat_{X, LX}(\eta_X)$ is the identity for each $X \in \A$.
\end{lemma}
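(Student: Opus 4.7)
The plan is to extract from the section $\flat$ a canonical idempotent on each $LX$, then identify splittings of these idempotents as precisely the obstruction to $L$ itself being a $J$-relative left adjoint. The key inputs throughout are the section identity $R(\flat_{X, Y}(f)) \circ \eta_X = f$ and the naturality of $\flat$ in both arguments. For idempotency of $e_X := \flat_{X, LX}(\eta_X)$: applying the section identity at $f = \eta_X$ gives $R(e_X) \circ \eta_X = \eta_X$, and naturality of $\flat$ in the codomain variable at the morphism $e_X$ then rewrites $\flat_{X, LX}(R(e_X) \circ \eta_X)$ as $e_X \circ \flat_{X, LX}(\eta_X) = e_X \circ e_X$; since the left-hand side also equals $\flat_{X, LX}(\eta_X) = e_X$, we obtain $e_X = e_X \circ e_X$.

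For the forward implication, given splittings $L'X \xrightarrow{s_X} LX \xrightarrow{r_X} L'X$ with $s_X \circ r_X = e_X$ and $r_X \circ s_X = 1_{L'X}$, I would set $\eta'_X := R(r_X) \circ \eta_X$ and $\flat'_{X, Y}(f) := \flat_{X, Y}(f) \circ s_X$; the section identity for $\flat'$ reduces to that of $\flat$ after absorbing $R(s_X \circ r_X) = R(e_X)$ via $R(e_X) \circ \eta_X = \eta_X$, while the reverse identity $\flat'_{X, Y}(R(g) \circ \eta'_X) = g$ for $g \colon L'X \to Y$ follows from naturality of $\flat$ applied at $g \circ r_X$ together with $r_X \circ s_X = 1_{L'X}$. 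This upgrades the naturally weak relative adjunction to a genuine one, with $L'$ made functorial via the resulting universal property. Conversely, given a $J$-relative left adjoint $(L^*, \eta^*)$, I would define $\sigma_X \colon L^*X \to LX$ by the universal property of $\eta^*$ applied to $\eta_X$, and $\tau_X \colon LX \to L^*X$ as $\flat_{X, L^*X}(\eta^*_X) \circ e_X$. Uniqueness of factorisations through $\eta^*_X$ forces both $\tau_X \circ \sigma_X = 1_{L^*X}$ and $e_X \circ \sigma_X = \sigma_X$, and combining the latter with naturality of $\flat$ applied to the identity $R(\sigma_X \circ \flat_{X, L^*X}(\eta^*_X)) \circ \eta_X = \eta_X$ yields $\sigma_X \circ \tau_X = e_X$, producing the required splitting. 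The final assertion that $L$ is itself the relative left adjoint precisely when each $e_X = 1_{LX}$ is then immediate: identity splittings trivially exhibit $L$ via the forward direction, and conversely, if $L$ is the $J$-relative left adjoint via $\eta$, both $e_X$ and $1_{LX}$ factor $\eta_X$ through itself, so must agree.

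I expect the main obstacle to be the delicate choice of $\tau_X$ in the converse: the naive candidate $\flat_{X, L^*X}(\eta^*_X)$ only yields $\sigma_X \circ \tau_X \circ e_X = e_X$, which is insufficient to exhibit a splitting. Post-composing with $e_X$ is what makes the idempotent equation hold on the nose, and discovering this adjustment requires carefully tracking which equalities enjoy uniqueness (those downstream of $\eta^*$, governed by the genuine relative adjunction) and which enjoy only existence (those downstream of $\eta$, governed by the weak one).
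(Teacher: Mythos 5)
Your proof is correct, and it reaches the same conclusion by a noticeably more hands-on route than the paper. The paper's proof stays at the level of representables: the natural section--retraction pair induces an idempotent on $\C(LX,{-})$, Yoneda transports it to $e_X = \flat_{X,LX}(\eta_X)$, the presheaf $\E(JX,R{-})$ splits $\C(e_X,{-})$, and both directions of the equivalence then follow from essential uniqueness of idempotent splittings together with full faithfulness of the Yoneda embedding; in particular the comparison maps between $L^*X$ and a splitting of $e_X$ never have to be written down, so the subtlety you isolate (that the naive retraction $\flat_{X,L^*X}(\eta^*_X)$ must be corrected to $\flat_{X,L^*X}(\eta^*_X)\circ e_X$) is invisible there -- it is absorbed into the canonical isomorphism between two splittings of the same idempotent. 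Your argument instead verifies everything equationally from the section identity $R(\flat_{X,Y}(f))\circ\eta_X = f$ and naturality of $\flat$ in the $\C$-variable: the idempotency computation, the construction of $\eta'_X = R(r_X)\circ\eta_X$ and the two-sided inverse $\flat'_{X,Y}(f)=\flat_{X,Y}(f)\circ s_X$ in the forward direction, and the explicit splitting $\sigma_X\circ\tau_X=e_X$, $\tau_X\circ\sigma_X=1$ in the converse, are all checked correctly, and your handling of the final assertion (injectivity of $g\mapsto R(g)\circ\eta_X$ forcing $e_X=1$) is fine. What the paper's approach buys is brevity and the automatic converse; what yours buys is self-containedness (no appeal to uniqueness of splittings or to Yoneda beyond elementwise reasoning) and an explicit record of where each equality comes from. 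Both proofs defer, legitimately, the functoriality of the constructed left adjoint and naturality in $X$ to pointwise representability.
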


\begin{proof}
	The natural section--retraction pairs defined by a naturally weak relative adjunction induce an idempotent on each $\C(LX, {-})$, which in turn induces an idempotent $\flat_{X, LX}(\eta_X)$ on $LX$ by the Yoneda lemma. Since $\E(JX, R{-})$ splits the idempotent $\C(\flat_{X, LX}(\eta_X), {-})$, and splittings of idempotents are essentially unique, if $\flat_{X, LX}(\eta_X)$ splits as $LX \xto r SX \xto s LX$, then $\C(SX, {-}) \iso \E(JX, R{-})$. Conversely, if $R$ admits a $J$-relative left adjoint $S$, each $SX$ splits $\flat_{X, LX}(\eta_X)$, again by the Yoneda lemma.
\end{proof}

Just as we may define colimits in terms of (relative) adjoints, so too may we define the notion of \emph{naturally weak colimit} in terms of naturally weak (relative) adjoints, providing an analogous strengthening of the notion of \emph{weak colimit}~\cite[\S3]{kainen1971weak}. A weak colimit satisfies the \emph{existence property} of mediating morphisms from a colimit, without necessarily satisfying the \emph{uniqueness property}. A naturally weak colimit furthermore satisfies the property that the choices of mediating morphisms are natural in their codomain.

\begin{definition}
	Let $D \colon \J \to \M$ be a functor. A \emph{naturally weak colimit} for $D$ is a naturally weak left adjoint to the diagonal functor $\Delta \colon \M \to \M^\J$, relative to the functor $D \colon 1 \to \cl M^\J$.
	\[\begin{tikzcd}
		& \M \\
		1 && {\M^\J}
		\arrow["\Delta", from=1-2, to=2-3]
		\arrow[dashed, from=2-1, to=1-2]
		\arrow[""{name=0, anchor=center, inner sep=0}, "D"', from=2-1, to=2-3]
		\arrow["\copi", between={0.3}{0.7}, Rightarrow, from=0, to=1-2]
	\end{tikzcd}\qedshift\]
\end{definition}

Specialising \cref{relative-adjoint-via-splitting} to naturally weak colimits gives a sufficient condition for the existence of colimits. We note that this proof strategy is not original: it appears, for instance, in the construction of flexible limits in 2-categories~\cite[Proposition~4.8]{bird1989flexible} (albeit without explicitly identifying the notion of naturally weak colimit).

\begin{corollary}
	\label{colimit-from-splitting-idempotent}
	Let $D \colon \J \to \M$ be a functor admitting a naturally weak colimit $C$. Then $C$ is equipped with an idempotent $e$, which splits if and only $D$ admits a colimit, in which case the splitting exhibits the colimit. In particular, $C$ exhibits the colimit of $D$ if and only if $e$ is the identity.
\end{corollary}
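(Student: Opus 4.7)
The plan is to obtain the corollary as a direct specialisation of \cref{relative-adjoint-via-splitting} to the setting of the naturally weak relative adjunction defining a naturally weak colimit. In particular, no genuinely new work should be required: we simply have to translate the statement of the lemma into the colimit vocabulary. This is structurally the same move used to derive ordinary colimits from relative adjoints to the diagonal $\diag \colon \M \to \M^\J$ — a $D$-relative left adjoint to $\diag$ from the terminal category $1$ picks out precisely a colimit cocone for $D$, since the universal property $\M(L*, X) \iso \M^\J(D, \diag X)$ (natural in $X$) is the usual characterisation of $\colim D$.

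First I would instantiate \cref{relative-adjoint-via-splitting} with $\A = 1$, $\C = \M$, $\E = \M^\J$, the right-hand functor being $R = \diag$, the comparison functor being $J = D$, and the naturally weak left adjoint being $L \colon 1 \to \M$ picking out the prescribed vertex $C$, together with the cocone $\copi \colon D \tto \diag \c L$. Since $\A = 1$ has a unique object $*$, the family of idempotents produced by the lemma reduces to a single idempotent $e \defeq \flat_{*, C}(\copi_*)$ on $C \in \M$. The lemma's equivalence — that $\diag$ admits a $D$-relative left adjoint iff each such idempotent splits, with the splitting exhibiting the adjoint — then translates verbatim to: $D$ admits a colimit iff $e$ splits, and the splitting exhibits the colimit.

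For the final sentence, I would again appeal directly to the concluding clause of \cref{relative-adjoint-via-splitting}: the already-given $L$ is itself a $D$-relative left adjoint to $\diag$ exactly when $\flat_{*, C}(\copi_*)$ is the identity, which in the present setting is precisely the statement that the given vertex $C$, with its cocone $\copi$, already exhibits the colimit of $D$.

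The main (and essentially only) obstacle is notational bookkeeping: checking that the naturally weak relative adjunction packaged into the definition of a naturally weak colimit really does match the hypotheses of \cref{relative-adjoint-via-splitting} under the identifications above, and that "$\diag$ admits a $D$-relative left adjoint from $1$" is synonymous with "$D$ has a colimit". Both are immediate from unfolding the definitions, so the proof should be a short paragraph rather than an involved argument.
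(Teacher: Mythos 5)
Your proof is correct and follows exactly the paper's route: the paper's proof is simply ``Immediate from \cref{relative-adjoint-via-splitting}'', and your instantiation with $\A = 1$, $\C = \M$, $\E = \M^\J$, $R = \Delta$, $J = D$ is precisely the intended specialisation. Nothing further is needed.
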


\begin{proof}
	Immediate from \cref{relative-adjoint-via-splitting}.
\end{proof}

To conclude the section, we recall an elementary property of idempotents that will be useful in what follows.

\begin{lemma}
    \label{idempotent-retraction}
    An idempotent $e$ admits a retraction if and only if $e = 1$.
\end{lemma}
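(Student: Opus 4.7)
The plan is to give a direct one-line calculation for each direction, exploiting only the idempotence equation $e \circ e = e$ and the defining equation of a retraction.

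For the ($\Leftarrow$) direction, if $e = 1$, then $e$ is trivially its own retraction since $e \circ e = 1 \circ 1 = 1$.

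For the ($\Rightarrow$) direction, suppose $r$ is a retraction of $e$, i.e.\ $r \circ e = 1$. The idea is to precompose the idempotence equation $e \circ e = e$ with $r$ on the left. On the one hand, $r \circ (e \circ e) = (r \circ e) \circ e = 1 \circ e = e$. On the other hand, $r \circ e = 1$. Hence $e = 1$. (Equivalently, $e = 1 \circ e = (r \circ e) \circ e = r \circ (e \circ e) = r \circ e = 1$.)

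I do not anticipate any obstacle: the proof is purely formal and uses nothing beyond associativity and the two governing equations. The only minor point of care is fixing the convention of what "retraction of $e$" means — here, a morphism $r$ with $r \circ e = 1$ — and then observing that the calculation works symmetrically if one instead takes the dual convention (a morphism $s$ with $e \circ s = 1$), which yields the same conclusion by postcomposing idempotence with $s$ on the right.
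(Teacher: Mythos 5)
Your proof is correct and is essentially identical to the paper's: the paper's one-line computation $e = (re)e = r(ee) = re = 1$ is exactly your chain $e = 1 \circ e = (r \circ e) \circ e = r \circ (e \circ e) = r \circ e = 1$, with the same convention that a retraction $r$ of $e$ satisfies $r \circ e = 1$. Nothing further is needed.
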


\begin{proof}
    One direction is trivial. For the other, assume the existence of a retraction $r$. Then we have:
    \[e = (re)e = r(ee) = re = 1 \qedhere\]
\end{proof}

\section{Magmal categories and magmas}
\label{magmal-categories-and-magmas}

We may now proceed with presenting our promised characterisation of cocartesian monoidal categories. We begin by introducing the notion of \emph{colax unital magmal category} (mildly generalising the unital magmal categories of \cite[\S2.1 \& \S2.5]{davydov2007nuclei}).

\begin{definition}
    A \emph{magmal category} comprises a category $\M$ equipped with a functor ${{\otimes} \colon \M \times \M \to \M}$, the \emph{tensor product}. A magmal category $(\M, \otimes)$ is \emph{colax unital} when equipped with an object $I \in \M$, the \emph{unit}, and natural transformations $\lambda \colon \ph \tto I \otimes \ph$ and $\rho \colon \ph \tto \ph \otimes I$, the \emph{left-} and \emph{right-unitor} respectively, satisfying $\lambda_I = \rho_I \colon I \to I \otimes I$. A colax unital magmal category is \emph{unital} if the unitors $\lambda$ and $\rho$ are invertible.
\end{definition}

Note that being unital is essentially a property of a magmal category: given two unital structures $(I, \lambda, \rho)$ and $(I', \lambda', \rho')$ on a magmal category $\M$, we have canonical isomorphisms between the two units:
\begin{equation}
	I \xto{\rho'_I} I \otimes I' \xto{\lambda\inv_{I'}} I' \hspace{8em}
    I \xto{\lambda'_I} I' \otimes I \xto{\rho\inv_{I'}} I'
\end{equation}

Just as we may consider monoids in a monoidal category, we may consider [unital] magmas in [colax unital] magmal categories.

\begin{definition}
    Let $(\M, \otimes, I)$ be a colax unital magmal category. A \emph{unital magma} therein comprises
    \begin{enumerate}
        \item an object $A \in \M$;
        \item a morphism $\eta \colon I \to A$;
        \item a morphism $\mu \colon A \otimes A \to A$,
    \end{enumerate}
    rendering the following diagram commutative.
	\[\begin{tikzcd}
		{I \otimes A} & {A \otimes A} & {A \otimes I} \\
		A & A & A
		\arrow["{\eta \otimes A}", from=1-1, to=1-2]
		\arrow["\mu"{description}, from=1-2, to=2-2]
		\arrow["{A \otimes \eta}"', from=1-3, to=1-2]
		\arrow["{\lambda_A}", from=2-1, to=1-1]
		\arrow[equals, from=2-1, to=2-2]
		\arrow[equals, from=2-2, to=2-3]
		\arrow["{\rho_A}"', from=2-3, to=1-3]
	\end{tikzcd}\]
    A \emph{homomorphism} from $(A, \mu_A, \eta_A)$ to $(B, \mu_B, \eta_B)$ is a morphism $f \colon A \to B$ rendering the following diagram commutative.
    \[\begin{tikzcd}
    	{A \otimes A} && {B \otimes B} \\
    	A && B \\
    	& I
    	\arrow["{f \otimes f}", from=1-1, to=1-3]
    	\arrow["{\mu_A}"', from=1-1, to=2-1]
    	\arrow["{\mu_B}", from=1-3, to=2-3]
    	\arrow["f"{description}, from=2-1, to=2-3]
    	\arrow["{\eta_A}", from=3-2, to=2-1]
    	\arrow["{\eta_B}"', from=3-2, to=2-3]
    \end{tikzcd}\]
    Unital magmas in $(\M, \otimes, I)$ and their homomorphisms form a category equipped with a forgetful functor to $\M$.
\end{definition}

Observe that a [colax unital] magmal structure on $\M$ equips the endofunctor category $[\M, \M]$ with [colax unital] magmal structure pointwise: explicitly, given endofunctors $F, G \colon \M \to \M$, the tensor product $(F \otimes G)\ph \defeq F\ph \otimes G\ph$; and the unit is the constant functor $\u I \colon \M \to \M$ on $I \in \M$.
The relevance of unital magmas to cocartesian monoidal categories is illustrated by the following observation.

\begin{lemma}[\cite{fox1976coalgebras}]
    \label{cartesian-implies-unique-commutative-comonoid}
    If $\M$ is a cocartesian monoidal category, then the identity functor on $\M$ admits a unique unital magma structure in $[\M, \M]$. Furthermore, this structure is associative and commutative.
\end{lemma}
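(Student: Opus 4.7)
The plan is to exploit the universal properties underlying the cocartesian structure directly, namely $I = 0$ (initial object) and $A \otimes B = A + B$ (binary coproduct). For the unit $\eta \colon \u I \tto 1_\M$, initiality of $I$ forces the component $\eta_A \colon I \to A$ to be the unique such morphism, and naturality in $A$ is automatic; this gives both existence and uniqueness of $\eta$, with no further axioms left to verify.

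For the multiplication $\mu \colon 1_\M \otimes 1_\M \tto 1_\M$, I would invoke the coproduct universal property: any morphism $\mu_A \colon A + A \to A$ is uniquely determined by the pair of composites $\mu_A \circ \iota_1, \mu_A \circ \iota_2 \colon A \to A$, where $\iota_1, \iota_2$ denote the coproduct injections. In the cocartesian monoidal structure, the unitors $\lambda_A$ and $\rho_A$ are (canonically identified with) these injections. The two unital axioms of a unital magma then force $\mu_A \circ \iota_1 = 1_A$ and $\mu_A \circ \iota_2 = 1_A$, so $\mu_A$ must equal the codiagonal $\nabla_A = [1_A, 1_A]$, uniquely. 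Naturality of $\nabla$ in $A$ is a standard property of the coproduct: for any $f \colon A \to B$, both $f \circ \nabla_A$ and $\nabla_B \circ (f \otimes f)$ compute to $[f, f]$ by postcomposition with the injections.

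Finally, I would verify associativity and commutativity by reducing them to standard properties of codiagonals, which in turn follow by precomposing with coproduct injections. Commutativity amounts to $\nabla_A \circ \sigma_{A, A} = \nabla_A$, which holds because the symmetry $\sigma_{A, A}$ exchanges the two injections while $\nabla_A$ sends both to $1_A$. Associativity amounts to showing that both $\nabla_A \circ (\nabla_A \otimes A)$ and $\nabla_A \circ (A \otimes \nabla_A)$ agree as morphisms out of the triple coproduct $A + A + A$, and precomposing with each of the three evident coproduct injections yields $1_A$ in both cases.

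I do not anticipate any substantive obstacle: the entire argument is a direct application of the universal properties of $0$ and $+$. The only place where care is required is the bookkeeping that identifies the abstract monoidal data $(\otimes, I, \lambda, \rho)$ with the concrete coproduct data $(+, 0, \iota_1, \iota_2)$, so that the unital-magma equations translate correctly into statements about coproduct injections.
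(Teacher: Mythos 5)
Your proposal is correct and follows essentially the same route as the paper: initiality pins down $\eta$, the two unit laws force $\mu_A$ to restrict to the identity along both coproduct injections and hence to be the codiagonal $[1_A,1_A]$, and naturality, associativity and commutativity are all read off from the universal property of the coproducts. The only cosmetic point is that the injections are not the unitors themselves but the composites $(A\otimes []_A)\circ\rho_A$ and $([]_A\otimes A)\circ\lambda_A$ (with $\eta_A = []_A$), which is exactly the bookkeeping you flagged and which the paper's diagram makes explicit.
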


\begin{proof}
    For each object $A \in \M$, there is a unique morphism $[]_A \colon 0 \to A$ by initiality. Furthermore, the universal property of $A + A$ uniquely determines a multiplication.
	\[\begin{tikzcd}[row sep=large]
		{0 + A} & {A + A} & {A + 0} \\
		& A
		\arrow["{[]_A + A}", from=1-1, to=1-2]
		\arrow["{[[]_A, A]}"', from=1-1, to=2-2]
		\arrow["{[A, A]}"{description}, dashed, from=1-2, to=2-2]
		\arrow["{A + []_A}"', from=1-3, to=1-2]
		\arrow["{[A, []_A]}", from=1-3, to=2-2]
	\end{tikzcd}\]
    That every morphism is a homomorphism, as well as the associativity and commutativity of the magma structures, each follows directly from the universal property of the coproducts.
\end{proof}

The purpose of the next few sections is to show that \cref{cartesian-implies-unique-commutative-comonoid} admits several converses, even after dropping the assumption of uniqueness of the unital magma structures.

\section{Semicocartesian categories}

Our first step is to identify conditions ensuring that the unit of a magmal category is initial, a property for which it is useful to have a name.

\begin{definition}
	A colax unital magmal category $(\M, \otimes, I, \lambda, \rho)$ is \emph{semicocartesian} if $I$ is initial.
\end{definition}

We recall a simple observation regarding the existence of initial objects in a category.

\begin{lemma}[{\cite[Proposition~4.2]{marmolejo1995ultraproducts}}]
    \label{initiality-via-splitting}
    Let $\M$ be a category equipped with an object $I$ and a natural transformation ${\eta \colon \u I \tto 1_\M}$. Then $\eta_I$ is idempotent, and splits if and only if $\M$ admits an initial object, in which case the splitting is initial. In particular, $I$ is initial if and only if $\eta_I = 1_I$.
\end{lemma}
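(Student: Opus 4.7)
The plan is to recognise $(I, \eta)$ as exhibiting $I$ as a naturally weak initial object of $\M$, so the lemma becomes essentially a specialisation of \cref{colimit-from-splitting-idempotent} to the empty diagram. Rather than unpacking that machinery, however, I would argue directly, since the empty-colimit case is especially transparent. Idempotency of $\eta_I$ would follow at once from the observation that $\u I$ is constant at $I$, so naturality of $\eta$ at any $f \colon X \to Y$ reduces to $f \circ \eta_X = \eta_Y$; specialising to $f \defeq \eta_I$ gives $\eta_I \circ \eta_I = \eta_I$.

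For the forward implication, if $\M$ has an initial object $0$ with unique maps ${!_X} \colon 0 \to X$, I would exhibit a splitting of $\eta_I$ as $I \xto{\eta_0} 0 \xto{!_I} I$: one triangle is naturality of $\eta$ at $!_I$, and the other is initiality of $0$, which forces $\eta_0 \circ {!_I} = 1_0$. For the reverse implication, suppose $\eta_I$ splits as $I \xto{r} J \xto{s} I$ with $rs = 1_J$. Naturality at $r$ gives $\eta_J = r \circ \eta_I = r \circ (sr) = r$, and then for any $f \colon J \to X$, naturality at $f$ yields $\eta_X = f \circ \eta_J = f \circ r$, whence $f = f \circ (rs) = \eta_X \circ s$. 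This pins down the unique map $J \to X$ and so exhibits $J$ as initial.

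The ``in particular'' clause follows immediately: if $I$ is itself initial then its only endomorphism is the identity, forcing $\eta_I = 1_I$; conversely, if $\eta_I = 1_I$ then the trivial splitting of $1_I$ identifies $I$ with the initial object via the preceding argument. Equivalently, one may appeal to \cref{idempotent-retraction} to deduce $\eta_I = 1_I$ from the existence of any retraction to the idempotent. I expect no substantial obstacle here; the argument amounts to careful bookkeeping of naturality squares, and the only risk is confusion over the direction of $\eta$ (which points \emph{from} $\u I$ \emph{to} $1_\M$) or over the roles of section versus retraction in the splitting.
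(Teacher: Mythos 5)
Your argument is correct, and every step checks out: idempotency of $\eta_I$ from naturality at $\eta_I$ itself, the splitting through an existing initial object via naturality at the unique map together with uniqueness of endomorphisms of $0$, and the converse computation $\eta_J = r$, $f = \eta_X \circ s$ showing the splitting object is initial. However, your route differs from the paper's in execution. The paper does not argue directly: it observes that $\eta$ exhibits $I$ as a naturally weak initial object, because one trivially has a section--retraction pair $\unit \colon \M(I, {-}) \leftrightarrows 1 \cocolon \eta$, and then simply invokes \cref{colimit-from-splitting-idempotent} (the specialisation of \cref{relative-adjoint-via-splitting} to naturally weak colimits), so the written proof is a one-liner. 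You explicitly recognise this specialisation but then choose to unwind it into an elementary chase of naturality squares. What your version buys is self-containedness: the lemma no longer depends on the Yoneda-style machinery of Section~2, and the empty-diagram case is indeed transparent enough that the direct argument is arguably clearer. What the paper's version buys is brevity and uniformity: the same idempotent-splitting template is reused essentially verbatim for binary coproducts in \cref{coproducts-via-splitting}, so factoring it through \cref{colimit-from-splitting-idempotent} avoids repeating the bookkeeping you carry out by hand. Your closing remark invoking \cref{idempotent-retraction} for the ``in particular'' clause is consistent with how the paper later uses that lemma (in \cref{eta-I-is-1}), though it is not needed here since the trivial splitting argument you give already suffices.
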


\begin{proof}
    The natural transformation $\eta$ exhibits a naturally weak initial object, since we trivially have a section--retraction pair $\unit \colon \M(I, {-}) \leftrightarrows 1 \cocolon \eta$, from which the result follows by \cref{colimit-from-splitting-idempotent}.
\end{proof}

The following lemma connects this observation to the existence of unital magma structure; in particular, the assumption holds if the identity functor on $\M$ admits a unital magma structure in $[\M, \M]$.

\begin{lemma}
    \label{eta-I-is-1}
    Let $(M, \otimes, I, \lambda, \rho)$ be a colax unital magmal category. Suppose that $I$ admits a unital magma structure $(\mu_I, \eta_I)$. Then $I$ is initial.
\end{lemma}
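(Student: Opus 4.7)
The plan is to apply \cref{initiality-via-splitting} to the natural transformation $\eta \colon \u I \tto 1_\M$, of which $\eta_I$ is the $I$-component; it therefore suffices to show that $\eta_I = 1_I$, which will follow from \cref{idempotent-retraction} once we verify both that $\eta_I$ is idempotent and that it admits a retraction.

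To construct a retraction, I start from the right unit law $\mu_I \circ (I \otimes \eta_I) \circ \rho_I = 1_I$, substitute $\rho_I = \lambda_I$ (a defining axiom of colax unital magmal categories), and apply naturality of $\lambda$ at the morphism $\eta_I \colon I \to I$, which yields $(I \otimes \eta_I) \circ \lambda_I = \lambda_I \circ \eta_I$. The unit law then rewrites as $(\mu_I \circ \lambda_I) \circ \eta_I = 1_I$, exhibiting $\mu_I \circ \lambda_I$ as a retraction of $\eta_I$. Idempotency of $\eta_I$ is immediate from naturality of $\eta$ at the endomorphism $\eta_I$: since $\u I$ acts trivially on morphisms, the naturality square collapses to $\eta_I \circ \eta_I = \eta_I$.

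With both properties in hand, \cref{idempotent-retraction} forces $\eta_I = 1_I$, and \cref{initiality-via-splitting} delivers the initiality of $I$. The key observation, and the only nontrivial point, is that the coincidence $\lambda_I = \rho_I$ allows naturality of $\lambda$ to convert one of the magma unit laws into a retraction equation for $\eta_I$; once this is noted, the rest is a direct application of the idempotent-splitting machinery already established.
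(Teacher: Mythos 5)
Your proof is correct and follows essentially the same route as the paper: you construct the same retraction $\mu_I \circ \lambda_I$ of $\eta_I$ from the right unit law together with $\lambda_I = \rho_I$ and naturality of $\lambda$ at $\eta_I$, and then conclude via \cref{idempotent-retraction} and \cref{initiality-via-splitting} exactly as the paper does (your explicit derivation of idempotency from naturality of $\eta$ is just the statement already packaged in \cref{initiality-via-splitting}). The only point worth noting is that, like the paper's own proof, your argument uses that $\eta_I$ is the $I$-component of a natural transformation $\u I \tto 1_\M$, which is how the lemma's hypothesis is intended to be read in context.
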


\begin{proof}
    The following diagram commutes using the right unit law for $I$, and implies that $\eta_I$ admits a retraction $\mu_I \lambda_I$. Hence, by \cref{initiality-via-splitting,idempotent-retraction}, $I$ is initial.
	\[\begin{tikzcd}[column sep=large]
		I && I \\
		I & {I \otimes I} & {I \otimes I} \\
		& I
		\arrow["{\eta_I}", from=1-1, to=1-3]
		\arrow[equals, from=1-1, to=2-1]
		\arrow[""{name=0, anchor=center, inner sep=0}, "{\lambda_I}"{description}, from=1-1, to=2-2]
		\arrow[""{name=1, anchor=center, inner sep=0}, "{\lambda_I}", from=1-3, to=2-3]
		\arrow["{\rho_I}"{description}, from=2-1, to=2-2]
		\arrow["{I \otimes \eta_I}"{description}, from=2-2, to=2-3]
		\arrow["{\mu_I}", from=2-3, to=3-2]
		\arrow[equals, from=3-2, to=2-1]
		\arrow["{\text{$\lambda$\ nat.}}"{description}, draw=none, from=1, to=0]
	\end{tikzcd}\qedshift\]
\end{proof}

For the subsequent section, it will be useful to have the following mild generalisation of \cite[
Theorem~3.5]{gerhold2022categorial} from monoidal categories to unital magmal categories, which establishes that semicocartesian magmal categories admit canonical choices of coprojections.

\begin{proposition}
    \label{semicocartesian-characterisations}
    For a unital magmal category $(\M, \otimes, I, \lambda, \rho)$, there is a bijection between the following.
    \begin{enumerate}[label=\alph*.]
        \item A natural transformation $[] \colon \u I \tto 1_\M$ exhibiting $I$ as initial.
        \item Natural coprojections $\copi_1^{A, B} \colon A \to A \otimes B$ and $\copi_2^{A, B} \colon B \to A \otimes B$ such that $\copi_2^{I, A} \colon A \to I \otimes A$ is $\lambda_A$ and $\copi_1^{A, I} \colon A \to A \otimes I$ is $\rho_A$.
    \end{enumerate}
	If $\M$ is merely colax unital, then (a) $\implies$ (b).
\end{proposition}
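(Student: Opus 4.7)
My plan is to exhibit the bijection via explicit formulas in each direction, then verify that the required well-definedness and the mutual-inverse identities both reduce to routine naturality chases. Given $[] \colon \u I \tto 1_\M$ as in (a), I would construct the coprojections of (b) by
\[
    \copi_1^{A, B} \defeq (A \otimes []_B) \c \rho_A, \qquad
    \copi_2^{A, B} \defeq ([]_A \otimes B) \c \lambda_B.
\]
Conversely, in the unital case, I would recover (a) from (b) by setting $[]_A \defeq \rho_A\inv \c \copi_2^{A, I}$ (symmetrically, one could use $\lambda_A\inv \c \copi_1^{I, A}$, and the bijection then forces these to agree).

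Next, I would verify that these assignments are well-defined. For the forward direction, naturality of $\copi_1^{A, B}$ in $A$ follows from naturality of $\rho$ together with functoriality of $\otimes$, while naturality in $B$ follows from naturality of $[]$; symmetrically for $\copi_2$. The unitor identifications $\copi_1^{A, I} = \rho_A$ and $\copi_2^{I, A} = \lambda_A$ each collapse to the observation that $[]_I = 1_I$, which holds because $I$ is initial. Crucially, this argument uses neither invertibility of $\lambda$ nor of $\rho$, which handles the final implication for the merely colax unital case. For the backward direction, naturality of $[]_A$ follows from naturality of $\copi_2$ in its first slot together with naturality of $\rho\inv$; and the hypothesis $\copi_2^{I, I} = \lambda_I$ combined with the colax unital axiom $\lambda_I = \rho_I$ gives $[]_I = \rho_I\inv \c \lambda_I = 1_I$. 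A standard one-line naturality argument applied to an arbitrary $f \colon I \to A$ then yields $f = []_A$, exhibiting $I$ as initial.

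Finally, to check the two assignments are mutually inverse, the point is that both recovery identities fall out immediately from naturality of the starting data. Starting from coprojections, naturality of $\copi_1^{A, {-}}$ applied to $[]_B \colon I \to B$ together with the constraint $\copi_1^{A, I} = \rho_A$ gives $(A \otimes []_B) \c \rho_A = \copi_1^{A, B}$, with the analogous identity for $\copi_2$. Starting from $[]$, expanding $\rho_B\inv \c \copi_2^{B, I}$ as $\rho_B\inv \c ([]_B \otimes I) \c \lambda_I$ and invoking naturality of $\rho$ on $[]_B$ (using $\lambda_I = \rho_I$) collapses this back to $[]_B$. I do not anticipate any real obstacle beyond these routine chases; the only mild subtlety is picking the right formula for the backward direction, which is essentially forced by requiring $\copi_2^{A, I}$ to appear and $\rho\inv$ to be applicable.
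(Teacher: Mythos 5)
Your proposal is correct and follows essentially the same route as the paper: the same explicit formulas $(A\otimes[]_B)\circ\rho_A$ and $([]_A\otimes B)\circ\lambda_B$ for (a)~$\implies$~(b), recovery of $[]$ from a unit-instance coprojection composed with an inverse unitor, the constraints reduced to $[]_I = 1_I$, and both round trips settled by naturality. The only (immaterial) differences are that you use $\rho_A\inv\circ\copi_2^{A,I}$ where the paper uses $\lambda_A\inv\circ\copi_1^{I,A}$, and that you prove initiality by the direct one-line naturality argument rather than citing \cref{initiality-via-splitting}.
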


\begin{proof}
    (a) $\implies$ (b). We define coprojections as follows, which trivially satisfy the given conditions since $[]_I = 1_I$.
    \begin{align*}
        A \xto{\rho_A} A \otimes I \xto{A \otimes []_B} A \otimes B \\
        B \xto{\lambda_B} I \otimes B \xto{[]_A \otimes B} A \otimes B
    \end{align*}

    (b) $\implies$ (a). We define a natural transformation
    \[I \xto{\copi_1^{I, A}} I \otimes A \xto{\lambda\inv_A} A\]
    which is the identity when $A = I$ by assumption, using that $\lambda_I = \rho_I$. Thus by \cref{initiality-via-splitting}, $I$ is initial.

	That (a $\tto$ b $\tto$ a) is the identity is trivial. That (b $\tto$ a $\tto$ b) is the identity follows from naturality of the coprojections.
	\[
	\begin{tikzcd}
		A & {A \otimes B} \\
		A & {A \otimes I}
		\arrow["{\copi_1^{A, B}}", from=1-1, to=1-2]
		\arrow[equals, from=2-1, to=1-1]
		\arrow["{\copi_1^{A, I}}"', from=2-1, to=2-2]
		\arrow["{A \otimes []_B}"', from=2-2, to=1-2]
	\end{tikzcd}
	\hspace{4em}
	\begin{tikzcd}
		{A \otimes B} & B \\
		{I \otimes B} & B
		\arrow["{\copi_2^{A, B}}"', from=1-2, to=1-1]
		\arrow["{[]_A \otimes B}", from=2-1, to=1-1]
		\arrow[equals, from=2-2, to=1-2]
		\arrow["{\copi_2^{I, B}}", from=2-2, to=2-1]
	\end{tikzcd}
	\qedshift
	\]
\end{proof}

\section{Cocartesian categories}

Having established sufficient conditions for initiality of the unit, our next step is to identify conditions ensuring that the tensor product of a magmal category is actually a coproduct.

\begin{definition}
	\label{coprojections}
    A colax unital magmal category $(\M, \otimes, I, \lambda, \rho)$ is \emph{cocartesian} if it is semicocartesian (\ie{} $I$ is initial) and, for all $A, B \in \M$, the cospan of \cref{semicocartesian-characterisations}
    \[A \xto{\rho_A} A \otimes 0 \xto{A \otimes []_B} A \otimes B \xfrom{[]_A \otimes B} 0 \otimes B \xfrom{\lambda_B} B\]
    exhibits a binary coproduct.
\end{definition}

In this case, by the following lemma, it suffices to drop the prefix \emph{colax unital}.

\begin{lemma}
	If a colax unital magmal category is cocartesian, then is necessarily unital (\ie{} the unitors are invertible).
\end{lemma}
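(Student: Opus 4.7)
The plan is to extract the invertibility of $\lambda_B$ by specialising the cocartesian coproduct diagram of \cref{coprojections} to the case $A = I$, and dually to obtain invertibility of $\rho_A$ by taking $B = I$. Since the argument is symmetric in the two unitors, I will focus on $\lambda_B$.

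Setting $A = I$ in the cospan of \cref{coprojections}, and using that $[]_I = 1_I$ by initiality of $I$ (which is part of the cocartesian hypothesis), the cospan becomes
\[I \xto{\rho_I} I \otimes I \xto{I \otimes []_B} I \otimes B \xfrom{1_{I \otimes B}} I \otimes B \xfrom{\lambda_B} B,\]
and this exhibits $I \otimes B$ as a coproduct $I + B$ with second coprojection precisely $\lambda_B \colon B \to I \otimes B$.

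The key observation is then the general fact that a coproduct with an initial object has its second coprojection invertible: since for any $X \in \M$ the factorisation map
\[\M(I \otimes B, X) \to \M(I, X) \times \M(B, X)\]
is a bijection and $\M(I, X)$ is a singleton by initiality, the composite with projection to $\M(B, X)$ — which is precomposition with $\lambda_B$ — is a bijection, so $\lambda_B$ is an isomorphism by the Yoneda lemma. Dually, setting $B = I$ in the cocartesian cospan shows that $\rho_A \colon A \to A \otimes I$ is the first coprojection of a coproduct $A + I$ and is thus invertible. Hence $\lambda$ and $\rho$ are invertible, so $(\M, \otimes, I, \lambda, \rho)$ is unital. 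No real obstacle is anticipated; the argument is just the observation that the cocartesian definition forces the unitors to play the role of coproduct injections into unit-absorbing sums.
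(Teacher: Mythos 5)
Your proof is correct and takes essentially the same route as the paper's: both specialise one tensor factor to the initial unit so that the unitor becomes a coprojection of the cocartesian cospan, and then invoke the universal property of that coproduct. The only cosmetic difference is that the paper exhibits the explicit inverse $\big[[]_A, 1_A\big]$ and verifies both composites using the existence and uniqueness of mediating morphisms, whereas you deduce invertibility of the coprojection from the induced bijections on hom-sets together with the Yoneda lemma.
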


\begin{proof}
	The following diagram commutes by the existence property of the mediating morphism.
	\[\begin{tikzcd}
		A & {0 \otimes A} \\
		& A
		\arrow["{\lambda_A}", from=1-1, to=1-2]
		\arrow[equals, from=1-1, to=2-2]
		\arrow["{\big[[]_A, 1_A\big]}", from=1-2, to=2-2]
	\end{tikzcd}\]
	The following diagram commutes by the uniqueness property of the mediating morphism, by precomposing by $[]_{0 \otimes A} \colon 0 \to 0 \otimes A$ and $\lambda_A \colon A \to 0 \otimes A$.
	\[\begin{tikzcd}
		{0 \otimes A} & A \\
		& {0 \otimes A}
		\arrow["{\big[[]_A, 1_A\big]}", from=1-1, to=1-2]
		\arrow[equals, from=1-1, to=2-2]
		\arrow["{\lambda_A}", from=1-2, to=2-2]
	\end{tikzcd}\]
	Invertibility of the right-unitor $\rho$ follows symmetrically.
\end{proof}

As discussed in the introduction, every cocartesian category is a monoidal category. It is relevant to observe that the associator in a cocartesian monoidal category is uniquely determined by the universal property of the coproducts: in other words, a cocartesian magmal category is monoidal in a unique way.

\begin{lemma}
    \label{unique-associator}
    For every cocartesian magmal category, there is a unique natural transformation $\{ \alpha_{A, B, C} \colon (A \otimes B) \otimes C \to A \otimes (B \otimes C) \}_{A, B, C}$. Furthermore, this natural transformation exhibits the associator of a monoidal structure.
\end{lemma}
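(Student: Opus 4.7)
The plan is to exploit the universal property of coproducts. Since $(\M, \otimes, I)$ is cocartesian, iterating the binary coproduct property shows that both $(A \otimes B) \otimes C$ and $A \otimes (B \otimes C)$ are triple coproducts of $A$, $B$, $C$, each equipped with three canonical coprojections built by composing the binary coprojections supplied by \cref{semicocartesian-characterisations}. These coprojections are natural in $A$, $B$, $C$.

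For existence, define $\alpha_{A, B, C} \colon (A \otimes B) \otimes C \to A \otimes (B \otimes C)$ as the unique morphism induced by the universal property of the source coproduct, matching each source coprojection to the corresponding target coprojection. Naturality of $\alpha$ in $A$, $B$, $C$ is then immediate from the naturality of the coprojections on both sides.

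For uniqueness, let $\beta$ be any natural transformation of the specified form. By the universal property of the source, $\beta$ is determined by its three restrictions to the source coprojections, each of which is itself a natural transformation $X \Rightarrow A \otimes (B \otimes C)$ in $A, B, C$ (for $X \in \{A, B, C\}$). Using naturality with respect to the unique maps supplied by initiality of $I$, one specialises the ``non-$X$'' variables to $I$; the target then collapses, via the unitors, to $X$ itself, identifying each restriction with a central natural endomorphism of $1_\M$ postcomposed with the corresponding target coprojection. The cocartesian structure forces this endomorphism to be the identity, giving $\beta = \alpha$. This uniqueness step is the main obstacle, as it requires a careful interplay between naturality in all three variables and the role of $I$ as initial.

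Finally, the pentagon and triangle axioms reduce, via the universal property of the domain iterated coproducts, to equations whose restrictions to each coprojection coincide: both sides yield the canonical coprojections of the relevant iterated coproduct. Hence $\alpha$ exhibits the associator of a monoidal structure on $(\M, \otimes, I, \lambda, \rho)$.
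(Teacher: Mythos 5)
Your existence step, the naturality of the candidate $\alpha$, and the reduction of the pentagon and triangle axioms to the universal property of the iterated coproducts are all fine, and correspond to the classical part of the argument that the paper simply cites; the paper's own proof is devoted entirely to uniqueness, and there your proposal has a genuine gap at exactly the point you flag as the main obstacle. After restricting $\beta$ to a source coprojection and specialising the other two variables to $I$ along the initial maps, you do correctly obtain an identification of $\beta \circ \mathrm{copr}_X$ with the target coprojection precomposed with a natural endomorphism $\theta$ of $1_\M$; but you then assert that ``the cocartesian structure forces this endomorphism to be the identity'', and this is not a true principle. A cocartesian category can have a non-trivial centre: in $(\mathbf{Ab}, \oplus, 0)$ multiplication by $2$ is a natural endomorphism of the identity functor, and in pointed sets with the wedge sum the constant-at-basepoint transformation is another. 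So cocartesianness alone gives you nothing about $\theta$; whatever forces $\theta = 1$ must come from the specific way $\theta$ arises from the three-variable naturality of $\beta$ and its interaction with the unitors and coprojections, and your proposal does not say what that argument is. Note also that full naturality of a family $(A \otimes B) \otimes C \to A \otimes (B \otimes C)$ is by itself a weak constraint (in $(\mathbf{Ab}, \oplus)$ the zero maps form such a natural family), so there is no soft way to close this gap: the crux of the whole lemma is concentrated in the step you leave unargued.

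The paper's proof shares your skeleton -- reduce to the three coprojections and use the universal property of the iterated coproduct -- but at the crux it never collapses down to an endomorphism of $1_\M$. Instead, for each of the coprojections of $A$, $B$ and $C$ it carries out an explicit diagram chase, combining naturality of $\alpha$ with respect to the initial maps $[]$, naturality of $\lambda$ and $\rho$, and initiality of the unit, to identify the composite of $\alpha_{A,B,C}$ with the source coprojection directly with the corresponding target coprojection; uniqueness then follows as in your last step. To repair your argument you would need to replace the sentence ``the cocartesian structure forces this endomorphism to be the identity'' with a chase of this kind (or otherwise exhibit the compatibility of $\beta$ with the unitor data that pins $\theta$ down); as written, that step is a missing idea rather than a routine verification.
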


\begin{proof}
	The existence of such a natural transformation $\alpha$ is classical~\cite[\S IV.2]{eilenberg1966closed}, so it remains to establish uniqueness.

    First, observe that the following diagram commutes using initiality.
    \[\begin{tikzcd}
    	{(A \otimes B) \otimes 0} && {(A \otimes B) \otimes C} \\
    	{A \otimes B} & {A \otimes (B \otimes 0)} \\
    	{A \otimes 0} && {A \otimes (B \otimes C)}
    	\arrow["{(A \otimes B) \otimes []_C}", from=1-1, to=1-3]
    	\arrow["{\alpha_{A, B, 0}}"{description}, from=1-1, to=2-2]
    	\arrow["{\alpha_{A, B, C}}", from=1-3, to=3-3]
    	\arrow["{\rho_{A \otimes B}}", from=2-1, to=1-1]
    	\arrow["{A \otimes \rho_B}"{description}, from=2-1, to=2-2]
    	\arrow["{\nat\alpha}"{description}, draw=none, from=2-2, to=1-3]
    	\arrow["{A \otimes (B \otimes []_C)}"{description}, from=2-2, to=3-3]
    	\arrow["{A \otimes []_B}", from=3-1, to=2-1]
    	\arrow["{A \otimes []_{B \otimes C}}"', from=3-1, to=3-3]
    \end{tikzcd}\]
    Precomposing $\rho_A$, the top path is precisely the coprojection $A \to (A \otimes B) \otimes C$, while the bottom path is precisely the coprojection $A \to A \otimes (B \otimes C)$.

    Next, observe that the following diagram commutes.
    \[\begin{tikzcd}
    	{(A \otimes B) \otimes 0} &&& {(A \otimes B) \otimes C} \\
    	{A \otimes B} & {A \otimes (B \otimes 0)} && {A \otimes (B \otimes C)} \\
    	{0 \otimes B} & {0 \otimes (B \otimes 0)} \\
    	B & {B \otimes 0} & {B \otimes C} & {0 \otimes (B \otimes C)}
    	\arrow["{(A \otimes B) \otimes []_C}", from=1-1, to=1-4]
    	\arrow["{\alpha_{A, B, 0}}"{description}, from=1-1, to=2-2]
    	\arrow["{\alpha_{A, B, C}}", from=1-4, to=2-4]
    	\arrow["{\rho_{A \otimes B}}", from=2-1, to=1-1]
    	\arrow["{A \otimes \rho_B}"{description}, from=2-1, to=2-2]
    	\arrow["{\nat\alpha}"{description}, draw=none, from=2-2, to=1-4]
    	\arrow["{A \otimes (B \otimes []_C)}"{description}, from=2-2, to=2-4]
    	\arrow["{[]_A \otimes B}", from=3-1, to=2-1]
    	\arrow["{0 \otimes \rho_B}"{description}, from=3-1, to=3-2]
    	\arrow["{[]_A \otimes (B \otimes 0)}"{description}, from=3-2, to=2-2]
    	\arrow["{[]_A \otimes (B \otimes []_C)}"{description}, from=3-2, to=2-4]
    	\arrow["{\nat\lambda}"{description}, draw=none, from=3-2, to=4-3]
    	\arrow["{0 \otimes (B \otimes []_C)}"{description}, from=3-2, to=4-4]
    	\arrow["{\rho_B}", from=4-1, to=3-1]
    	\arrow["{\lambda_B}"', from=4-1, to=4-2]
    	\arrow["{\lambda_{B \otimes 0}}"{description}, from=4-2, to=3-2]
    	\arrow["{B \otimes []_C}"', from=4-2, to=4-3]
    	\arrow["{\lambda_{B \otimes C}}"', from=4-3, to=4-4]
    	\arrow["{[]_A \otimes (B \otimes C)}"', from=4-4, to=2-4]
    \end{tikzcd}\]
    The top path is precisely the coprojection $B \to (A \otimes B) \otimes C$, while the bottom path is precisely the coprojection $B \to A \otimes (B \otimes C)$.

    Finally, observe that the following diagram commutes.
    \[\begin{tikzcd}
    	{0 \otimes C} &&& {(A \otimes B) \otimes C} \\
    	{B \otimes C} && {(0 \otimes B) \otimes C} \\
    	{0 \otimes (B \otimes C)} &&& {A \otimes (B \otimes C)}
    	\arrow["{[]_{A \otimes B} \otimes C}", from=1-1, to=1-4]
    	\arrow["{[]_B \otimes C}"', from=1-1, to=2-1]
    	\arrow["{[]_{0 \otimes B} \otimes C}"{description}, from=1-1, to=2-3]
    	\arrow["{\alpha_{A, B, C}}", from=1-4, to=3-4]
    	\arrow["{\lambda_B \otimes C}"{description}, from=2-1, to=2-3]
    	\arrow["{\lambda_{B \otimes C}}"', from=2-1, to=3-1]
    	\arrow["{([]_A \otimes B) \otimes C}"{description}, from=2-3, to=1-4]
    	\arrow["{\alpha_{0, B, C}}"{description}, from=2-3, to=3-1]
    	\arrow["{\nat\alpha}"{description}, draw=none, from=2-3, to=3-4]
    	\arrow["{[]_A \otimes (B \otimes C)}"', from=3-1, to=3-4]
    \end{tikzcd}\]
    Precomposing $\lambda_C$, the top path is precisely the coprojection $C \to (A \otimes B) \otimes C$, while the bottom path is precisely the coprojection $C \to A \otimes (B \otimes C)$.
\end{proof}

The following extends \cref{initiality-via-splitting} from nullary coproducts to finite coproducts.

\begin{lemma}
    \label{coproducts-via-splitting}
    Let $(\M, \otimes, I, \lambda, \rho)$ be a semicocartesian colax unital magmal category and suppose that the identity functor on $\M$ admits a unital magma structure $(\mu, \eta)$ in $[\M, \M]$. $(\M, \otimes, I, \lambda, \rho)$ is naturally weakly cocartesian\footnotemark{} and the following endomorphism is idempotent for all $A, B \in \M$.
	\begin{equation}
		\label{idempotent}
		A \otimes B \xto{\rho_A \otimes \lambda_B} (A \otimes I) \otimes (I \otimes B) \xto{(A \otimes \eta_B) \otimes (\eta_A \otimes B)} (A \otimes B) \otimes (A \otimes B) \xto{\mu_{A \otimes B}} A \otimes B
	\end{equation}
	Furthermore, this idempotent splits if and only if $\M$ admits the binary coproduct of $A$ and $B$, in which case the splitting exhibits the binary coproduct. In particular, $\M$ is cocartesian if and only if \eqref{quasi-symmetry-equation} commutes for all $A, B \in \M$.
	\footnotetext{That is, the tensor product $A \otimes B$ satisfies the existence and naturality property of the mediating morphisms from a binary coproduct, but not necessarily the uniqueness property.}%
\end{lemma}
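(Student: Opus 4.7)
The plan is to exhibit $A \otimes B$ as a naturally weak binary coproduct of $A$ and $B$, then apply \cref{colimit-from-splitting-idempotent} to extract the promised idempotent and characterise when its splitting yields a genuine coproduct. Since $I$ is initial, \cref{semicocartesian-characterisations} supplies canonical coprojections $\copi_1^{A, B} = (A \otimes []_B) \circ \rho_A$ and $\copi_2^{A, B} = ([]_A \otimes B) \circ \lambda_B$; by initiality we may rewrite $[]_A = \eta_A$ and $[]_B = \eta_B$, which will be crucial for matching the stated formula.

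The naturally weak cocartesian structure is built from the magma multiplication: for each $X \in \M$, I would set
\[\flat_X \colon \M(A, X) \times \M(B, X) \to \M(A \otimes B, X), \qquad (f, g) \mapsto \mu_X \circ (f \otimes g),\]
with naturality in $X$ immediate from naturality of $\mu$. To verify that $\flat_X$ sections the precomposition map $h \mapsto (h \circ \copi_1^{A, B}, h \circ \copi_2^{A, B})$, I would unwind
\[\mu_X \circ (f \otimes g) \circ \copi_1^{A, B} = \mu_X \circ (X \otimes \eta_X) \circ \rho_X \circ f = f,\]
using functoriality of $\otimes$, initiality of $I$ (so that $g \circ []_B = []_X = \eta_X$), naturality of $\rho$, and the right unit law of the magma; the identity for $\copi_2^{A, B}$ follows symmetrically from the left unit law.

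By \cref{colimit-from-splitting-idempotent}, $A \otimes B$ is thereby equipped with an idempotent $\flat(\copi_1^{A, B}, \copi_2^{A, B})$ which splits exactly when the binary coproduct of $A$ and $B$ exists, the splitting then exhibiting that coproduct. A direct expansion using functoriality of $\otimes$ identifies this idempotent with
\[\mu_{A \otimes B} \circ \bigl((A \otimes \eta_B) \otimes (\eta_A \otimes B)\bigr) \circ (\rho_A \otimes \lambda_B),\]
which is precisely the composite in \eqref{idempotent}. For the final claim, $\M$ is cocartesian precisely when every such idempotent equals the identity, and the equation expressing this is exactly \eqref{quasi-symmetry-equation}. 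I expect the only delicate point to be the identification of $[]$ with $\eta$ via initiality, without which the coprojections and the displayed idempotent would fail to align; once this bookkeeping is in hand, everything reduces to the machinery of naturally weak colimits developed earlier.
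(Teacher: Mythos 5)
Your proposal is correct and follows essentially the same route as the paper: it exhibits the section $(f, g) \mapsto \mu_X \circ (f \otimes g)$ for precomposition with the canonical coprojections, verifies the coprojection identities via the unit laws together with naturality (the paper uses naturality of $\eta$ where you invoke initiality of $I$, an equivalent step), and then applies \cref{colimit-from-splitting-idempotent} to identify the idempotent \eqref{idempotent} and conclude. No gaps.
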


\begin{proof}
	The result will follow from \cref{colimit-from-splitting-idempotent}, once we exhibit a section for the natural transformation
	\begin{equation}
		\label{coproduct-retraction}
		\M(A \otimes B, {-}) \to \M(A, {-}) \times \M(B, {-})
	\end{equation}
	given by precomposing the candidate coprojections of \cref{coprojections}. Suppose that $X \in \M$ is an object equipped with morphisms $a \colon A \to X$ and $b \colon B \to X$. The morphism
    \begin{equation}
        [a, b] \defeq A \otimes B \xto{a \otimes b} X \otimes X \xto{\mu_X} X
    \end{equation}
	defines a componentwise section for \eqref{coproduct-retraction}, the two coprojection laws for a weak coproduct following from commutativity of the following diagrams, where the unlabelled regions are unitality.
	\[
	\begin{tikzcd}
		A & X \\
		{A \otimes I} & {X \otimes I} & X \\
		{A \otimes B} & {X \otimes X}
		\arrow["a", from=1-1, to=1-2]
		\arrow[""{name=0, anchor=center, inner sep=0}, "{\rho_A}"', from=1-1, to=2-1]
		\arrow[""{name=1, anchor=center, inner sep=0}, "{\rho_X}"{description}, from=1-2, to=2-2]
		\arrow[equals, from=1-2, to=2-3]
		\arrow["{a \otimes I}"{description}, from=2-1, to=2-2]
		\arrow[""{name=2, anchor=center, inner sep=0}, "{A \otimes \eta_B}"', from=2-1, to=3-1]
		\arrow[""{name=3, anchor=center, inner sep=0}, "{X \otimes \eta_X}"{description}, from=2-2, to=3-2]
		\arrow["{a \otimes b}"', from=3-1, to=3-2]
		\arrow["{\mu_X}"', from=3-2, to=2-3]
		\arrow["{\nat\rho}"{description}, draw=none, from=0, to=1]
		\arrow["{\nat\eta}"{description}, draw=none, from=2, to=3]
	\end{tikzcd}
	\hspace{2em}
	\begin{tikzcd}
		& X & B \\
		X & {I \otimes X} & {I \otimes B} \\
		& {X \otimes X} & {A \otimes B}
		\arrow[equals, from=1-2, to=2-1]
		\arrow[""{name=0, anchor=center, inner sep=0}, "{\lambda_X}"{description}, from=1-2, to=2-2]
		\arrow["b"', from=1-3, to=1-2]
		\arrow[""{name=1, anchor=center, inner sep=0}, "{\rho_B}", from=1-3, to=2-3]
		\arrow[""{name=2, anchor=center, inner sep=0}, "{\eta_X \otimes X}"{description}, from=2-2, to=3-2]
		\arrow["{I \otimes b}"{description}, from=2-3, to=2-2]
		\arrow[""{name=3, anchor=center, inner sep=0}, "{\eta_A \otimes B}", from=2-3, to=3-3]
		\arrow["{\mu_X}", from=3-2, to=2-1]
		\arrow["{a \otimes b}", from=3-3, to=3-2]
		\arrow["{\nat\lambda}"{description}, draw=none, from=1, to=0]
		\arrow["{\nat\eta}"{description}, draw=none, from=3, to=2]
	\end{tikzcd}
	\]
	Naturality of the section follows from naturality of $\mu$. It remains simply to observe that, by definition, \eqref{idempotent} is indeed the idempotent induced by the naturally weak coproduct.
\end{proof}

\begin{corollary}
	Let $(\M, \otimes, I, \lambda, \rho)$ be a colax unital magmal category and suppose that the identity functor on $\M$ admits a unital magma structure $(\mu, \eta)$ in $[\M, \M]$. If idempotents in $\M$ split, then $\M$ admits finite coproducts.
\end{corollary}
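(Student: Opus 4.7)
The plan is to combine the two sufficient-conditions lemmas from this section, \cref{eta-I-is-1} and \cref{coproducts-via-splitting}, to deduce nullary and binary coproducts respectively; finite coproducts then follow by the standard fact that an initial object together with binary coproducts suffices.

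First, I would extract from the unital magma structure $(\mu, \eta)$ on $1_\M$ a unital magma structure on the object $I \in \M$, by evaluating componentwise at $I$. By \cref{eta-I-is-1}, this implies that $I$ is initial, so $(\M, \otimes, I, \lambda, \rho)$ is semicocartesian in the sense of the previous section. This puts us in the hypotheses of \cref{coproducts-via-splitting}.

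Next, for each pair of objects $A, B \in \M$, \cref{coproducts-via-splitting} produces the canonical idempotent of \eqref{idempotent} on $A \otimes B$, and asserts that whenever this idempotent splits the splitting exhibits the binary coproduct $A + B$. Since by hypothesis all idempotents in $\M$ split, the binary coproduct of $A$ and $B$ exists. Thus $\M$ has an initial object and all binary coproducts, hence all finite coproducts. No step here is really an obstacle: the content is entirely in the two previous lemmas, and this corollary is a direct consequence of assembling them, the only mild observation being that a unital magma structure on the identity functor of $\M$ restricts at the object $I$ to a unital magma structure on $I$ in the sense required by \cref{eta-I-is-1}.
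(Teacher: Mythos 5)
Your proposal is correct and matches the paper's argument, which likewise just assembles the nullary and binary cases: the paper's proof reads ``Immediate from \cref{initiality-via-splitting,coproducts-via-splitting}'', using the splitting of the idempotent $\eta_I$ for the initial object where you instead invoke \cref{eta-I-is-1} to get $I$ initial outright (as the paper itself does in the proof of the main theorem). Your observation that the magma structure on $1_\M$ restricts at $I$ to the hypothesis of \cref{eta-I-is-1} is exactly the right glue, so there is nothing to fix.
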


\begin{proof}
	Immediate from \cref{initiality-via-splitting,coproducts-via-splitting}.
\end{proof}

\section{Symmetry}

Next, we consider the interaction between cocartesian structure and symmetry. First, we observe that, just as the universal property of coproducts in a cocartesian magmal category determines a unique associator, so too does it determine a unique braiding.

\begin{lemma}[{\cite[\S3.2.42]{kock2004frobenius}}]
    For every cocartesian monoidal category, there is a unique natural transformation $\{ \sigma_{A, B} \colon A \otimes B \to B \otimes C \}_{A, B}$. Furthermore, this natural transformation exhibits the braiding of a symmetry.
\end{lemma}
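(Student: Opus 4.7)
The plan is to follow the strategy of \cref{unique-associator} closely. The existence of a braiding in any cocartesian monoidal category is classical (and can be extracted, for instance, from the unique mediating morphism $[\copi_2^{B, A}, \copi_1^{B, A}]$), so it remains to establish uniqueness. Any natural transformation $\sigma_{A, B} \colon A \otimes B \to B \otimes A$ is fully determined by the universal property of $A \otimes B$ as a binary coproduct once we know its compositions with the two coprojections $\copi_1^{A, B}$ and $\copi_2^{A, B}$; hence uniqueness reduces to verifying the two coprojection identities
\[
    \sigma_{A, B} \circ \copi_1^{A, B} = \copi_2^{B, A}
    \qquad \text{and} \qquad
    \sigma_{A, B} \circ \copi_2^{A, B} = \copi_1^{B, A}.
\]

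To verify the first identity, I would build a commutative diagram in the same spirit as the one in \cref{unique-associator}, reducing $\sigma_{A, B}$ to $\sigma_{A, I}$ via naturality of $\sigma$ with respect to $[]_B \colon I \to B$ in the second variable, and then further to $\sigma_{I, I}$ via naturality with respect to $[]_A \colon I \to A$ in the first variable. The crucial observation is that $I \otimes I$ is a coproduct of two initial objects and is therefore itself initial, which forces $\sigma_{I, I} = 1_{I \otimes I}$. The peripheral regions of the diagram then commute by naturality of $\lambda$ and $\rho$, initiality of $I$, and the identity $\lambda_I = \rho_I$; chasing through, the path $\sigma_{A, B} \circ \copi_1^{A, B}$ reduces to $\copi_2^{B, A}$. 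The second coprojection identity follows by a symmetric argument exchanging the roles of the two variables, and of $\lambda$ and $\rho$.

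For the second assertion of the lemma --- that this unique $\sigma$ exhibits the braiding of a symmetry --- I would verify the symmetry axioms (the involution $\sigma_{B, A} \circ \sigma_{A, B} = 1_{A \otimes B}$, the hexagon identity, and compatibility with the unitors) by precomposing each axiom equation with the coprojections of the relevant coproduct and invoking the universal property; naturality of the structure maps, initiality of $I$, and the already-established characterisation of $\sigma$ on coprojections handle the resulting calculations.

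The main obstacle is the diagram chase for the first coprojection identity: as in \cref{unique-associator}, arranging the peripheral regions so that their commutativity follows purely from naturality and initiality is delicate, and the use of $\lambda_I = \rho_I$ must be placed carefully in the reduction through $\sigma_{I, I}$. The adaptation to the two-variable braiding setting is largely a matter of bookkeeping once the correct skeleton of the diagram is identified.
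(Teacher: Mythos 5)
Your overall strategy --- determining $\sigma$ by precomposition with the coprojections, in imitation of \cref{unique-associator} --- is the natural one to try, but the central reduction step does not work. Naturality in the second variable with respect to $[]_B$ does give $\sigma_{A,B} \circ \copi_1^{A,B} = ([]_B \otimes A) \circ \sigma_{A,I} \circ \rho_A$, so far so good. But the further reduction ``to $\sigma_{I,I}$ via naturality with respect to $[]_A$ in the first variable'' only controls the composite $\sigma_{A,I} \circ ([]_A \otimes I)$, i.e.\ the restriction of $\sigma_{A,I}$ along the coprojection out of $I \otimes I$ --- and since $I \otimes I$ is initial, any two maps out of it agree, so this step carries no information (your correct observation that $\sigma_{I,I} = 1$ does not propagate). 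What your chase actually needs is $\sigma_{A,I} \circ \rho_A = \lambda_A$, the value of $\sigma_{A,I}$ on the \emph{other} coprojection, and mere naturality cannot supply this.

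No rearrangement of the diagram can close this gap, because the coprojection identity --- and with it uniqueness among arbitrary natural transformations --- is simply false in general. In $\M = \mathbf{Ab}$ with $\otimes = \oplus$, the family $\sigma'_{A,B}(a,b) = (-b,-a)$ has homomorphism components, is natural in $(A,B)$, and differs from the canonical swap, yet $\sigma'_{A,I} \circ \rho_A = -\lambda_A$; more generally one can twist the swap by any natural endomorphism of the identity functor, so uniqueness fails whenever the centre of $\M$ is nontrivial. What is true (and is presumably the content of the cited result) is that the swap is the unique natural transformation additionally satisfying the braiding unit law $\sigma_{A,I} \circ \rho_A = \lambda_A$ --- equivalently, the unique braiding, which is then automatically a symmetry; with that extra hypothesis your coprojection computation and the verification of the remaining axioms do go through by the universal property, exactly as you describe. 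Be aware also that the template you are copying is itself suspect: the region asserted to commute ``using initiality'' in the proof of \cref{unique-associator} fails for $-\alpha$ in $(\mathbf{Ab},\oplus)$ by the same token, so following that argument more carefully will not rescue the statement as literally formulated; the hypothesis on $\sigma$ must be strengthened.
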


We hence see that, under the assumption of symmetry, the condition \eqref{quasi-symmetry-equation} may be replaced by another, involving the symmetry.

\begin{corollary}
    \label{symmetry-implies-quasi-symmetry}
    Let $(\M, \otimes, I, \lambda, \rho, \alpha, \sigma)$ be a symmetric monoidal category. Suppose that the identity functor on $\M$ admits a unital magma structure $(1_\M, \mu, \eta)$ in $[\M, \M]$. If \eqref{symmetry-equation} commutes for $A, B \in \M$, then \eqref{quasi-symmetry-equation} commutes for $A, B \in \M$.
\end{corollary}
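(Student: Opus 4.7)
The plan is to reduce the square \eqref{quasi-symmetry-equation} to \eqref{symmetry-equation} together with the unit axioms of the magma structure on $1_\M$. The key intermediate step is to show that the map
\[(A \otimes \eta_B) \otimes (\eta_A \otimes B) \colon (A \otimes I) \otimes (I \otimes B) \to (A \otimes B) \otimes (A \otimes B)\]
factors as the composite
\[(A \otimes I) \otimes (I \otimes B) \xto{(A \otimes \eta_A) \otimes (\eta_B \otimes B)} (A \otimes A) \otimes (B \otimes B) \xto{A \otimes \sigma_{A, B} \otimes B} (A \otimes B) \otimes (A \otimes B),\]
so that, after postcomposing with $\mu_{A \otimes B}$, we may replace $\mu_{A \otimes B} \circ (A \otimes \sigma_{A, B} \otimes B)$ by $\mu_A \otimes \mu_B$ using \eqref{symmetry-equation}.

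To establish the factorisation, it suffices to check that $\sigma_{A, B} \circ (\eta_A \otimes \eta_B) = \eta_B \otimes \eta_A$ as morphisms $I \otimes I \to B \otimes A$, since the ``outer'' tensorands of $A \otimes \sigma_{A, B} \otimes B$ act as identities. By naturality of the symmetry applied to $\eta_A \otimes \eta_B$, the left-hand side equals $(\eta_B \otimes \eta_A) \circ \sigma_{I, I}$, so the desired identity reduces to $(\eta_B \otimes \eta_A) \circ \sigma_{I, I} = \eta_B \otimes \eta_A$. But the magma structure on $1_\M$ restricts pointwise to a unital magma structure on $I$, so \cref{eta-I-is-1} ensures that $I$ is initial; consequently $I \otimes I$ is initial (being isomorphic to $I$ via $\lambda_I\inv$), and any two parallel morphisms out of $I \otimes I$ agree.

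Chaining together the factorisation with \eqref{symmetry-equation} and functoriality of $\otimes$, the composite in \eqref{quasi-symmetry-equation} becomes
\[\bigl(\mu_A \circ (A \otimes \eta_A) \circ \rho_A\bigr) \otimes \bigl(\mu_B \circ (\eta_B \otimes B) \circ \lambda_B\bigr),\]
which equals $1_A \otimes 1_B = 1_{A \otimes B}$ by the right unit law for the magma $A$ and the left unit law for the magma $B$.

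The main subtlety is justifying the factorisation step, since $A \otimes \sigma_{A, B} \otimes B$ involves implicit associators; however, once interpreted as acting only on the ``middle'' tensorands, the argument reduces to a single application of naturality of $\sigma$ together with the initiality of $I$ provided by \cref{eta-I-is-1}.
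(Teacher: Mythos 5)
Your proof is correct and takes essentially the same route as the paper's: both factor $(A \otimes \eta_B) \otimes (\eta_A \otimes B)$ through $(A \otimes \eta_A) \otimes (\eta_B \otimes B)$ followed by the middle interchange $A \otimes \sigma_{A,B} \otimes B$ (justified by naturality of $\sigma$), then conclude with \eqref{symmetry-equation} and the magma unit laws. The only, harmless, deviation is how $\sigma_{I,I}$ is eliminated: you use initiality of $I$ (via \cref{eta-I-is-1} applied to the pointwise magma structure on $I$), whereas the paper appeals to the symmetric monoidal coherence law giving $\sigma_{I,I} = 1_{I \otimes I}$.
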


\begin{proof}
    The following diagram commutes using the unit laws for the magmas, and the assumption. ($*$) follows from naturality of $\sigma$ and one of the symmetry laws.
	\[\begin{tikzcd}[column sep=large]
		{(A \otimes I) \otimes (I \otimes B)} && {(A \otimes B) \otimes (A \otimes B)} \\
		& {(A \otimes A) \otimes (B \otimes B)} \\
		{A \otimes B} && {A \otimes B}
		\arrow[""{name=0, anchor=center, inner sep=0}, "{(A \otimes \eta_B) \otimes (\eta_A \otimes B)}", from=1-1, to=1-3]
		\arrow["{(A \otimes \eta_A) \otimes (\eta_B \otimes B)}"{description}, from=1-1, to=2-2]
		\arrow["{\mu_{A \otimes B}}", from=1-3, to=3-3]
		\arrow["{A \otimes \sigma_{A, B} \otimes B}"{description}, from=2-2, to=1-3]
		\arrow["{\mu_A \otimes \mu_B}"{description}, from=2-2, to=3-3]
		\arrow["{\rho_A \otimes \lambda_B}", from=3-1, to=1-1]
		\arrow[equals, from=3-1, to=3-3]
		\arrow["{\text{($*$)}}"{description}, draw=none, from=0, to=2-2]
	\end{tikzcd}\qedshift\]
\end{proof}

Let us provide a word of explanation as to the nature of condition \eqref{symmetry-equation}: as the following lemma shows, it is precisely the condition that the chosen multiplication structure on a tensor product $A \otimes B$ is given by a certain canonical choice.

\begin{lemma}[{\cite{fox1976coalgebras}}]
    \label{tensor-product-of-unital-magmas}
    In a symmetric monoidal category with unital magmas $(A, \mu_A, \eta_A)$ and $(B, \mu_B, \eta_B)$, the morphisms
	\[(A \otimes B) \otimes (A \otimes B) \xto{A \otimes \sigma_{B, A} \otimes B} (A \otimes A) \otimes (B \otimes B) \xto{\mu_A \otimes \mu_B} A \otimes B\]
	\[I \xto{\lambda_I} I \otimes I \xto{\eta_A \otimes \eta_B} A \otimes B\]
    defines a unital magma structure on $A \otimes B$.
\end{lemma}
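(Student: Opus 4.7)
The plan is to verify the two unit laws for the proposed structure $(\mu_{A \otimes B}, \eta_{A \otimes B})$ on $A \otimes B$. Since a unital magma requires only unitality (not associativity nor commutativity of the multiplication), these are the only obligations; in particular, the proof will not need to invoke associativity or commutativity of the component magmas, consistent with the statement.

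To establish the left unit law $\mu_{A \otimes B} \circ (\eta_{A \otimes B} \otimes (A \otimes B)) \circ \lambda_{A \otimes B} = 1_{A \otimes B}$, I would first unfold the definitions to the composite
\[(\mu_A \otimes \mu_B) \circ (A \otimes \sigma_{B, A} \otimes B) \circ ((\eta_A \otimes \eta_B) \otimes (A \otimes B)) \circ (\lambda_I \otimes (A \otimes B)) \circ \lambda_{A \otimes B}.\]
The crucial step is to apply naturality of the middle-swap $(X \otimes Y) \otimes (Z \otimes W) \to (X \otimes Z) \otimes (Y \otimes W)$ (itself a natural isomorphism assembled from associators and $\sigma_{Y, Z}$) with respect to the morphisms $\eta_A$, $\eta_B$, $1_A$, $1_B$: this rewrites the second and third arrows as $((\eta_A \otimes A) \otimes (\eta_B \otimes B))$ precomposed with the middle-swap $(I \otimes I) \otimes (A \otimes B) \to (I \otimes A) \otimes (I \otimes B)$. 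Symmetric monoidal coherence now identifies the composite of this middle-swap with the tail $(\lambda_I \otimes (A \otimes B)) \circ \lambda_{A \otimes B}$ as the single morphism $\lambda_A \otimes \lambda_B \colon A \otimes B \to (I \otimes A) \otimes (I \otimes B)$, since both have the same source and target and induce the identity permutation on the tensor factors $A, B$. Functoriality of $\otimes$ then factors the whole composite as $(\mu_A \circ (\eta_A \otimes A) \circ \lambda_A) \otimes (\mu_B \circ (\eta_B \otimes B) \circ \lambda_B) = 1_A \otimes 1_B = 1_{A \otimes B}$ by the left unit laws for the component magmas.

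The right unit law is verified by the entirely symmetric argument, applying middle-swap naturality to $1_A$, $1_B$, $\eta_A$, $\eta_B$ and collapsing $\rho_{A \otimes B}$, $(A \otimes B) \otimes \lambda_I$, and the middle-swap $(A \otimes B) \otimes (I \otimes I) \to (A \otimes I) \otimes (B \otimes I)$ into $\rho_A \otimes \rho_B$ by coherence, then invoking the right unit laws. The main obstacle is not conceptual but presentational: keeping the associator, unitor, and symmetry coherences clean. The most transparent write-up assembles everything into a single commutative diagram whose perimeter is the unit law and whose interior consists of naturality cells for $\sigma$ and the unitors, a coherence cell, and an instance of each component unit law tensored together.
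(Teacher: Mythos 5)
Your proof is correct. Note that the paper itself gives no proof of \cref{tensor-product-of-unital-magmas}, citing \textcite{fox1976coalgebras} instead, so there is no in-paper argument to compare against; your direct verification is the standard one and fills in the omitted details soundly. The two key moves -- naturality of the middle interchange $(X \otimes Y) \otimes (Z \otimes W) \to (X \otimes Z) \otimes (Y \otimes W)$ in all four variables, and symmetric monoidal coherence to identify the unit-only composites $A \otimes B \to (I \otimes A) \otimes (I \otimes B)$ (respectively $A \otimes B \to (A \otimes I) \otimes (B \otimes I)$) with $\lambda_A \otimes \lambda_B$ (respectively $\rho_A \otimes \rho_B$) -- are both legitimate, since the composites being identified are built purely from $\alpha$, $\lambda$, $\rho$, $\sigma$ and induce the same (identity) matching of the generator occurrences $A$, $B$. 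You are also right that only the two unit laws need checking, so associativity and commutativity of the component magmas never enter.
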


\begin{remark}
	To ensure a symmetric monoidal structure is cococartesian, it is common to see in the literature the imposition that $\mu \colon {\otimes} \c \Delta \tto 1_\M$ and $\eta \colon \u I \tto 1_\M$ be monoidal natural transformations (\eg~\cite[Corollary~17]{mellies2009categorical}). In particular, magmality of $\mu$ implies \eqref{symmetry-equation}, so these conditions are certainly sufficient to imply cocartesianness. However, they also contain a significant amount of redundancy (as has also been observed in the remark following \cite[Corollary~17]{mellies2009categorical}): namely, both unitality of $\mu$ and monoidality of $\eta$ are implied by \eqref{symmetry-equation}.
\end{remark}

\begin{remark}
	As mentioned in the previous remark, in the presence of symmetry, magmality of $\mu$ implies \eqref{symmetry-equation}. However, in the absence of symmetry, ${\otimes} \c \Delta$ is not a magmal functor, and so it does not make sense to ask for $\mu$ to be magmal. However, we may compose \eqref{symmetry-equation} with the chosen unit structures to obtain the following diagram.
	\[\begin{tikzcd}[column sep=8em]
		{(A \otimes I) \otimes (I \otimes B)} & {(A \otimes A) \otimes (B \otimes B)} & {(A \otimes B) \otimes (A \otimes B)} \\
		{A \otimes B} && {A \otimes B}
		\arrow["{(A \otimes \eta_A) \otimes (\eta_B \otimes B)}"{description}, from=1-1, to=1-2]
		\arrow["{(A \otimes \eta_B) \otimes (\eta_A \otimes B)}", curve={height=-18pt}, from=1-1, to=1-3]
		\arrow["{A \otimes \sigma_{A, B} \otimes B}"{description}, from=1-2, to=1-3]
		\arrow["{\mu_A \otimes \mu_B}"{description}, from=1-2, to=2-3]
		\arrow["{\mu_{A \otimes B}}", from=1-3, to=2-3]
		\arrow["{\rho_A \otimes \lambda_B}", from=2-1, to=1-1]
		\arrow[equals, from=2-1, to=2-3]
	\end{tikzcd}\]
	Condition \eqref{quasi-symmetry-equation} is essentially an axiomatisation of the commutativity of the diagram above; it is the closest we can get to asking for $\mu$ to be a magmal transformation in the absence of symmetry.
\end{remark}

\section{An adjoint characterisation of cocartesianness}
\label{an-adjoint-characterisation}

In this section, we move to the second flavour of magmal characterisations of cocartesian monoidal categories. We will show that the existence of a right adjoint to the tensor product functor ${\otimes} \colon \M \times \M \to \M$ in a unital magmal category implies that $\M$ is cocartesian. Before proceeding with the proof, let us first spell out what it means for ${\otimes}$ to admit a right adjoint $\tp{L, R} \colon \M \to \M \times \M$. The most convenient characterisation for our purposes will be in terms of couniversal morphisms.

Spelling out the definition of such an adjoint, for every object $A \in \M$, we ask that there exists a morphism $\varepsilon_A \colon LA \otimes RA \to A$ such that every morphism $f \colon X \otimes Y \to A$ in $\M$ factors as $f^L \otimes f^R$ through $\varepsilon_A$, for a unique pair of morphisms $f^L \colon X \to LA$ and $f^R \colon Y \to RA$. It follows that these assignments are both natural~\cite[Theorem~IV.1.2]{maclane1998categories}.
\[\begin{tikzcd}[column sep=large]
	& {LA \otimes RA} \\
	{X \otimes Y} && A
	\arrow["{\varepsilon_A}", from=1-2, to=2-3]
	\arrow["{f^L \otimes f^R}", dashed, from=2-1, to=1-2]
	\arrow["f"', from=2-1, to=2-3]
\end{tikzcd}\]

We may thus define natural transformations
\begin{align*}
	(\lambda\inv)^L & \colon \u I \tto L &
	(\lambda\inv)^R & \colon \ph \tto R &
	(\rho\inv)^L & \colon \ph \tto L &
	(\rho\inv)^R & \colon \u I \tto R
\end{align*}
by the couniversal property, as follows.
\[
\begin{tikzcd}
	& {L \otimes R} \\
	{\u I \otimes \ph} && \ph
	\arrow["\varepsilon", from=1-2, to=2-3]
	\arrow["{(\lambda\inv)^L \otimes (\lambda\inv)^R}", from=2-1, to=1-2]
	\arrow["{\lambda\inv}"', from=2-1, to=2-3]
\end{tikzcd}
\hspace{4em}
\begin{tikzcd}
	& {L \otimes R} \\
	{\ph \otimes \u I} && \ph
	\arrow["\varepsilon", from=1-2, to=2-3]
	\arrow["{(\rho\inv)^L \otimes (\rho\inv)^R}", from=2-1, to=1-2]
	\arrow["{\rho\inv}"', from=2-1, to=2-3]
\end{tikzcd}
\]
This allows us to construct natural transformations as follows.
\begin{align}
	\eta & \defeq \u I \xto{\u{\lambda_I}} \u{I \otimes I} \xto{(\lambda\inv)^L \otimes (\rho\inv)^R} L \otimes R \xto{\varepsilon} \ph \\
	\mu & \defeq \ph \otimes \ph \xto{(\rho\inv)^L \otimes (\lambda\inv)^R} L \otimes R \xto{\varepsilon} \ph
\end{align}

To proceed, we could make use of our magmal characterisation of cocartesian monoidal categories, by showing that $(\mu, \eta)$ exhibits a unital magmal structure on $1_\M$ satisfying condition \eqref{quasi-symmetry-equation}. However, we will take a slightly more abstract approach, based on the discussion in \cite[\S3]{street2014kan}. Our main contribution in this regard is to observe that initiality of the unit is automatic.

\begin{lemma}
    \label{adjoint-implies-cocartesian}
    Let $(\M, \otimes, I, \lambda, \rho)$ be a unital magmal category for which ${\otimes} \colon \M \times \M \to \M$ admits a right adjoint $\tp{L, R} \colon \M \to \M \times \M$. Then the unital magmal structure is cocartesian monoidal.
\end{lemma}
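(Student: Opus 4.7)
The plan is to adapt the approach of \citeauthor{street2014kan}~\cite[\S3]{street2014kan}, which shows that a monoidal category with initial unit and tensor admitting a right adjoint is cocartesian, by establishing initiality of $I$ as a consequence of the hypotheses rather than as an additional assumption.

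First I would show that $\eta_I = 1_I$. Since $\lambda_I = \rho_I$, also $\lambda\inv_I = \rho\inv_I \colon I \otimes I \to I$, and uniqueness of factorisation through the couniversal morphism $\varepsilon_I$ forces $(\lambda\inv)^L_I = (\rho\inv)^L_I$ and $(\lambda\inv)^R_I = (\rho\inv)^R_I$. Substituting into the definition of $\eta_I$, the composite collapses to $\rho\inv_I \circ \rho_I = 1_I$; by \cref{initiality-via-splitting} applied to $\eta \colon \u I \tto 1_\M$, it follows that $I$ is initial. I expect the main obstacle to lie in this step: the argument depends essentially on the axiom $\lambda_I = \rho_I$, without which initiality of $I$ cannot be deduced from the adjoint structure alone.

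Next, using initiality of $I$, I would show that the natural transformations $(\rho\inv)^L \colon 1_\M \tto L$ and $(\lambda\inv)^R \colon 1_\M \tto R$ are componentwise isomorphisms. Since $\M(I, RA)$ is now a singleton, the adjunction isomorphism $\M(X \otimes I, A) \cong \M(X, LA) \times \M(I, RA)$ simplifies to $\M(X \otimes I, A) \cong \M(X, LA)$, which combined with $\rho_X$ yields $\M(X, A) \cong \M(X, LA)$; a short calculation using naturality of the adjunction identifies this isomorphism as postcomposition by $(\rho\inv)^L_A$, so by the Yoneda lemma $(\rho\inv)^L_A$ is an isomorphism, and symmetrically for $(\lambda\inv)^R_A$. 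Consequently $\M(X \otimes Y, A) \cong \M(X, A) \times \M(Y, A)$ naturally in all variables, exhibiting $X \otimes Y$ as the binary coproduct of $X$ and $Y$. A final bookkeeping step, transporting the unit of the adjunction along the isomorphisms $(\rho\inv)^L$ and $(\lambda\inv)^R$, identifies the coprojections of this coproduct with the canonical ones furnished by \cref{semicocartesian-characterisations}, whence the magmal structure is cocartesian.
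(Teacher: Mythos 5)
Your proposal is correct, and its first half -- deriving $\eta_I = 1_I$ from $\lambda_I = \rho_I$ via uniqueness of factorisation through $\varepsilon_I$, then invoking \cref{initiality-via-splitting} -- is exactly the paper's argument. Where you diverge is in the second half. The paper proceeds abstractly: once $I$ is initial, both ${\otimes}$ and $I \colon 1 \to \M$ have right adjoints, so the pair $\tp{L, R}$ equips $\M$ with a counital pseudocomagma structure in $\Cat$; since such structure on a category is essentially unique (the two-dimensional analogue of \cref{cartesian-implies-unique-commutative-comonoid}), the right adjoint must be the diagonal, whence ${\otimes} \adj \Delta$. You instead argue at the one-dimensional level: initiality of $I$ makes $\M(I, RA)$ a singleton, so the adjunction isomorphism together with invertibility of $\rho$ gives $\M(X, A) \iso \M(X, LA)$, identified (via naturality of $\ph^L$ and of $\rho\inv$) with postcomposition by $(\rho\inv)^L_A$, so Yoneda makes $(\rho\inv)^L$ and, symmetrically, $(\lambda\inv)^R$ invertible; then $\M(X \otimes Y, A) \iso \M(X, A) \times \M(Y, A)$ naturally, and the bookkeeping you defer does go through -- the composite bijection sends $f$ to its precomposites with the canonical coprojections of \cref{semicocartesian-characterisations}, since $(\rho\inv)^L_A \circ f \circ (X \otimes []_Y) \circ \rho_X = (f \circ (X \otimes []_Y))^L = f^L$ and dually -- which is precisely what \cref{coprojections} demands. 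Your route is more elementary and self-contained (it avoids citing the essential uniqueness of counital pseudocomagma structures, which the paper asserts without proof) and it directly verifies cocartesianness in the sense of \cref{coprojections}; the paper's route is shorter on the page, identifies the right adjoint with $\Delta$ outright, and uses invertibility of the unitors more sparingly, which is what supports the subsequent remark that lax unitality (plus a section of $\lambda_I$) suffices, a refinement your Yoneda argument, which uses $\rho\inv_X$ and $\lambda\inv_Y$ at every object, would not immediately give.
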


\begin{proof}
	We first show that $I$ is initial using \cref{initiality-via-splitting}, for which it suffices to show that $\eta_I$ is the identity. This follows from commutativity of the following diagram, using that $\lambda_I = \rho_I$.
	\[\begin{tikzcd}[column sep=huge]
		{I \otimes I} && {LI \otimes RI} \\
		I && I
		\arrow["{(\lambda\inv_I)^L \otimes (\rho\inv_I)^R}", curve={height=-12pt}, from=1-1, to=1-3]
		\arrow["{(\lambda\inv_I)^L \otimes (\lambda\inv_I)^R}"{description}, from=1-1, to=1-3]
		\arrow["{\lambda\inv_I}"{description}, from=1-1, to=2-3]
		\arrow["{\varepsilon_I}", from=1-3, to=2-3]
		\arrow["{\lambda_I}", from=2-1, to=1-1]
		\arrow[equals, from=2-1, to=2-3]
	\end{tikzcd}\]
	Consequently, both ${\otimes} \colon \M \times \M \to \M$ and $I \colon 1 \to \M$ admit right adjoints. These right adjoints equip $\M$ with the structure of a counital comagmal category, \ie{} a counital pseudocomagma in the cartesian monoidal 2-category $\Cat$. However, every category has an essentially unique counital pseudocomagma structure, given by the diagonal functors (this is a two-dimensional analogue of \cref{cartesian-implies-unique-commutative-comonoid}). Consequently, ${\otimes}$ is necessarily left-adjoint to the diagonal functor $\Delta \colon \M \to \M \times \M$, exhibiting it as cocartesian.
\end{proof}

\begin{remark}
	The proof of \cref{adjoint-implies-cocartesian} reveals that it suffices for $\M$ to be merely \emph{lax unital} and for $\lambda\inv_I$ to have a section.
\end{remark}

We make note of a useful consequence of this characterisation.

\begin{corollary}
    A cartesian monoidal category $\M$ has finite biproducts if and only if the cartesian product functor $\ph \times \ph \colon \M \times \M \to \M$ admits a right adjoint.
\end{corollary}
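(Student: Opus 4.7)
The plan is to derive both implications directly from \cref{adjoint-implies-cocartesian} (applied to the cartesian monoidal structure), together with the standard observation that a binary coproduct functor is precisely a left adjoint to the diagonal.

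For the implication that existence of a right adjoint to $\times$ implies biproducts, I would reason as follows. A cartesian monoidal category is in particular a unital magmal category, so \cref{adjoint-implies-cocartesian} applies and tells us that $(\M, \times, 1)$ is \emph{also} cocartesian monoidal. Consequently the single functor $\times$ simultaneously exhibits binary products and binary coproducts, and the unit $1$ is both terminal (by cartesianness) and initial (by cocartesianness) --- which is precisely the statement that $\M$ has finite biproducts.

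For the converse, I would observe that if $\M$ has finite biproducts then the cartesian product functor $\times$ coincides with the coproduct functor, and the coproduct functor is by definition left adjoint to the diagonal $\Delta \colon \M \to \M \times \M$. Hence $\times$ admits $\Delta$ as a right adjoint.

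I do not anticipate any substantial obstacle: both directions reduce immediately to the previous lemma and to the universal properties of (co)products. The only minor point to verify is the standard equivalence between ``finite biproducts'' and the coincidence of finite products with finite coproducts, which is witnessed automatically by the same functor serving both universal properties.
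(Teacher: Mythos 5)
Your proposal is correct and follows essentially the same route as the paper: the nontrivial direction rests on the fact (from \cref{adjoint-implies-cocartesian}, via the essential uniqueness of the comagma structure) that any right adjoint to $\times$ must be the diagonal, so that $\times$ also exhibits coproducts and the unit is initial as well as terminal, while the converse is the trivial observation that with biproducts the product functor is left adjoint to $\Delta$.
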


\begin{proof}
    One direction is trivial. In the other, if ${\times}$ admits a right adjoint, it is necessarily the diagonal functor, in which case ${\times}$ exhibits the coproduct.
\end{proof}

\section{Proof of theorem and necessity of the conditions}
\label{necessity}

In this final section, we tie everything together, giving a proof of the main theorem, and counterexamples demonstrating that both condition \eqref{quasi-symmetry-equation} and the assumption that every morphism is a homomorphism are necessary.

\begin{proof}[Proof of \cref{characterisations}]
    We will establish the following implications.
	\[\begin{tikzcd}
		{\text{\ref{adjoint-characterisation}}} & {\text{\ref{cocartesian}}} & {\text{\ref{uniqueness-characterisation}}} \\
		& {\text{\ref{sharpest-characterisation}}} & {\text{\ref{symmetry-characterisation}}}
		\arrow[curve={height=12pt}, from=1-1, to=1-2]
		\arrow[curve={height=12pt}, from=1-2, to=1-1]
		\arrow[from=1-2, to=1-3]
		\arrow[from=1-3, to=2-3]
		\arrow[from=2-2, to=1-2]
		\arrow[from=2-3, to=2-2]
	\end{tikzcd}\]
    (\ref{cocartesian} $\implies$ \ref{uniqueness-characterisation}) is \cref{cartesian-implies-unique-commutative-comonoid}.
    (\ref{uniqueness-characterisation} $\implies$ \ref{symmetry-characterisation}) follows from \cref{tensor-product-of-unital-magmas}.
    (\ref{symmetry-characterisation} $\implies$ \ref{sharpest-characterisation}) is \cref{symmetry-implies-quasi-symmetry}.
    (\ref{sharpest-characterisation} $\implies$ \ref{cocartesian}) is \cref{eta-I-is-1,coproducts-via-splitting}.
    (\ref{cocartesian} $\implies$ \ref{adjoint-characterisation}) follows since ${+} \adj \Delta$.
    (\ref{adjoint-characterisation} $\implies$ \ref{cocartesian}) is \cref{adjoint-implies-cocartesian}.
\end{proof}

We show that condition \eqref{quasi-symmetry-equation} is indeed necessary by constructing an example of a non-cocartesian symmetric monoidal category $\M$ for which $1_\M$ admits the structure of a cocommutative comonoid, but for which \eqref{quasi-symmetry-equation} does not hold.

\begin{proposition}
    \label{Set-monoidal-structure}
    $\Set$ is equipped with a semicocartesian symmetric monoidal structure whose tensor product is given by:
    \[A \otimes B \defeq A + A \times B + B\]
    Furthermore, the category of monoids therein is the category of semigroups.
\end{proposition}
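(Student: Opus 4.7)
The plan is to recognise the tensor $A \otimes B \defeq A + A \times B + B$ as inherited from the cartesian monoidal structure on pointed sets. Writing $A_+ \defeq A \sqcup \{*\}$ for $A$ with a freshly adjoined basepoint $*$, a direct case-check yields a natural bijection
\[A \otimes B \iso (A_+ \times B_+) \setminus \{(*, *)\},\]
under which the left $A$-summand of $A \otimes B$ corresponds to pairs $(a, *)$, the middle $A \times B$-summand to pairs $(a, b)$, and the right $B$-summand to pairs $(*, b)$. Iterating gives natural identifications of both $(A \otimes B) \otimes C$ and $A \otimes (B \otimes C)$ with $A_+ \times B_+ \times C_+ \setminus \{(*, *, *)\}$, and similarly for all higher tensor powers.

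With these identifications in hand, I would transport the cartesian symmetric monoidal structure on pointed sets to $(\Set, \otimes, \emptyset)$: the common identification of the two triple tensors supplies an associator; the symmetry of $\times$ supplies a braiding; the formulae $\emptyset \otimes A = A = A \otimes \emptyset$ give identity unitors; and the pentagon, hexagon, and triangle equations are inherited from the corresponding equations on pointed sets. Initiality of $\emptyset$ in $\Set$ is obvious, so the resulting monoidal structure is semicocartesian.

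For the characterisation of monoids, I would first observe that a unital magma $(A, \mu, \eta)$ in $(\Set, \otimes, \emptyset)$ has vacuous unit datum $\eta \colon \emptyset \to A$, while $\mu \colon A + A \times A + A \to A$ has its two outer $A$-components forced by the unit axioms to be the identity on $A$; the only free datum is a binary operation $m \colon A \times A \to A$ in the middle summand. Transporting to pointed sets, $\mu$ corresponds to the extension $\hat m \colon A_+ \times A_+ \to A_+$ that acts as $m$ on $A \times A$ and has $*$ as two-sided unit. The monoid associativity axiom for $\mu$ is equivalent to associativity of $\hat m$; since $*$ is a unit of $\hat m$, every instance involving a basepoint holds automatically, and the axiom reduces precisely to associativity of $m$ on $A \times A$. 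Monoid homomorphisms translate to semigroup homomorphisms in the same manner, so the category of monoids in $(\Set, \otimes, \emptyset)$ coincides with the category of semigroups.

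The only step requiring genuine care is the coherent verification of the symmetric monoidal axioms; once the pointed-set reformulation is in place these reduce to purely combinatorial bookkeeping on $A_+ \times B_+ \times C_+ \times D_+$, which is why I would package the reformulation up front rather than verifying pentagon and hexagon by hand.
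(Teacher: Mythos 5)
Your proposal is correct, but it takes a different route from the paper. The paper does not construct the monoidal structure at all: it simply cites an example of Garner (\emph{hypernormalisation}, Example~46) for the existence of the semicocartesian symmetric monoidal structure, and then characterises its monoids by a direct component-by-component computation, observing that the unit laws force the two outer components of $\mu \colon M + M \times M + M \to M$ to be identities and that the only nontrivial component of the associativity axiom is the $M^3 \to M$ one. You instead give a self-contained construction via the natural identification $A \otimes B \iso (A_+ \times B_+) \setminus \{(*,*)\}$, i.e.\ $(A \otimes B)_+ \iso A_+ \times B_+$, and reflect the cartesian symmetric monoidal coherence of pointed sets back along the faithful functor $({-})_+ \colon \Set \to \Set_*$; the monoid-equals-semigroup statement then becomes the classical ``adjoin a unit'' correspondence, with the unit axioms fixing the outer components exactly as in the paper. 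What your approach buys is a proof of the existence claim that the paper outsources to a citation, plus a conceptual explanation of why monoids are semigroups; what the paper's approach buys is brevity and no need to handle coherence. One small point of care in your write-up: ``transporting'' is not transport along an equivalence, since $({-})_+$ is faithful but not full, so you should say explicitly that the cartesian associator, braiding and unitors restrict to the complements of the all-basepoint tuples (which holds because only the all-basepoint tuple maps to the all-basepoint tuple, and $f_+$ never sends an element of $A$ to the basepoint), and that commutativity of the pentagon, hexagon and triangle in $\Set$ then follows by faithfulness; with that made explicit your argument is complete.
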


\begin{proof}
    The existence of the symmetric monoidal structure follows from \cite[Example~46]{garner2023hypernormalisation}. Consider a monoid $M$ with respect to this monoidal structure. The left unit law (below), together with the right unit law mean that the multiplication $M + M \times M + M \to M$ is determined by its central component: the other two components are induced by the identity on $M$.
	\[\begin{tikzcd}[column sep=7em]
		{0 + 0 \times M + M} & {M + M \times M + M} \\
		& M
		\arrow["{[]_M + []_{M \times M} + M}", from=1-1, to=1-2]
		\arrow["\iso"', from=1-1, to=2-2]
		\arrow[from=1-2, to=2-2]
	\end{tikzcd}\]
	Observe that: \[(M \otimes M) \otimes M = (M \otimes M) + (M \otimes M) \times M + M = (M + M \times M + M) + (M + M \times M + M) \times M + M\]
	The multiplication law is given by:
	\[\begin{tikzcd}
		{(M \otimes M) + (M \otimes M) \times M + M} && {M + M \times (M \otimes M) + (M \otimes M)} \\
		{M + M \times M + M} && {M \otimes M} \\
		& M
		\arrow["\alpha", from=1-1, to=1-3]
		\arrow["{m + m \times M + M}"', from=1-1, to=2-1]
		\arrow["{M + M \times m + m}", from=1-3, to=2-3]
		\arrow["m"', from=2-1, to=3-2]
		\arrow["m", from=2-3, to=3-2]
	\end{tikzcd}\]
	It is clear by inspection that the only nontrivial component is the $M^3 \to M$ component, which is exactly the associativity law. The characterisation of the morphisms follows similarly.
\end{proof}

\begin{proposition}[Egger]
    Condition \eqref{quasi-symmetry-equation} is necessary.
\end{proposition}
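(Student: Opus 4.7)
The plan is to exhibit a colax unital magmal category that is not cocartesian yet whose identity endofunctor carries a unital magma structure; by \cref{characterisations}, condition \eqref{quasi-symmetry-equation} must necessarily fail in such an example, demonstrating that it cannot be omitted.

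The candidate I would use is the semicocartesian symmetric monoidal category $(\Set, \otimes)$ of \cref{Set-monoidal-structure}, where $A \otimes B \defeq A + A \times B + B$. First, I would observe that $(\Set, \otimes)$ is not cocartesian: its unit is the initial object $0$, and $1 \otimes 1 = 1 + 1 \times 1 + 1$ has cardinality $3$, whereas the binary coproduct $1 + 1$ in $\Set$ has cardinality $2$.

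Next, I would equip the identity functor $1_\Set$ with a unital magma structure in $[\Set, \Set]$. By the analysis in the proof of \cref{Set-monoidal-structure}, the unit laws force the two outer components of the candidate multiplication $A + A \times A + A \to A$ to be identities, so specifying a unital magma structure on an object $A$ amounts to giving a binary operation $A \times A \to A$. Take the \emph{left-projection} family $(a, a') \mapsto a$ on every set $A$: for any function $f \colon A \to B$, the identity $f(a) = f(a)$ gives homomorphism of the left-projection operations, so these assemble into a natural family of unital magma structures, equivalently a unital magma structure on $1_\Set$ in $[\Set, \Set]$.

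There is no substantial obstacle beyond identifying the example: the key insight is simply that, in this monoidal structure on $\Set$ whose unital magmas reduce to arbitrary binary operations, pointwise left projection supplies a natural choice. The conclusion then follows immediately from \cref{characterisations}: since $(\Set, \otimes)$ admits a unital magma structure on $1_\Set$ but is not cocartesian, condition \eqref{quasi-symmetry-equation} must fail for this magma structure.
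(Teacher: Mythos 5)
Your proposal is correct and takes essentially the same route as the paper: the same monoidal structure $A \otimes B = A + A \times B + B$ on $\Set$ from \cref{Set-monoidal-structure}, equipped with the same left-projection (left band) unital magma structure on every object, which is natural because every function preserves first projections. The only cosmetic difference is that you deduce the failure of \eqref{quasi-symmetry-equation} indirectly from \cref{characterisations} after checking non-cocartesianness via $|1 \otimes 1| = 3 \neq 2$, whereas the paper computes directly that the composite in \eqref{quasi-symmetry-equation} sends a pair $(a,b)$ to $a$ and so is not the identity; both arguments are sound.
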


\begin{proof}
    Every set is a monoid with respect to \cref{Set-monoidal-structure}, since each set $A$ is equipped with semigroup structure given by the left band $(A, \pi_1)$ (alterntively the right band $(A, \pi_2)$). This assignment satisfies all of the conditions but \eqref{quasi-symmetry-equation}: the longer path appearing in the condition sends a pair $(a, b)$ in the first summand to its first component $a$, and thus the triangle does not commute.
\end{proof}

\begin{remark}
	In \cite[\S4]{freyd1999bireflectivity}, the authors study (the duals of) semicocartesian symmetric monoidal categories $\M$ for which each object admits the structure of a commutative monoid, but for which the morphisms are not required to preserve the monoid multiplications. In other words, the identity functor on $\M$ has the structure of a commutative monoid in the category $[\M, \M]_{\tx{unnat}}$ of endofunctors on $\M$ and \emph{unnatural} transformations. In \cite[Examples~29 \& 30]{freyd1999bireflectivity}, examples are given of such categories that are not cocartesian. Thus, the naturality of $\mu$ and $\eta$ in \hyperref[sharpest-characterisation]{\cref*{characterisations}.\ref*{sharpest-characterisation}} is also necessary. See also \cite[\S6.6]{corradini1999algebraic} for a survey of related classes of (non-cocartesian) monoidal categories, in which objects are equipped with algebraic structure, but for which morphisms are not necessarily homomorphisms.
\end{remark}

\begin{remark}
	One could imagine attempting to drop further assumptions on the magmal structure. For instance, while the existence and naturality of both unitors $\lambda$ and $\rho$ do appear necessary for \cref{characterisations}, we have not exhibited a counterexample when these conditions are dropped. However, in practice, the remaining assumptions are easily verified and it seems of limited interest to generalise further.
\end{remark}

\printbibliography

\end{document}